\theoremstyle{thmstyleone}%
\newtheorem{theorem}{Theorem}[section]
\newtheorem{lemma}{Lemma}[section]
\newtheorem{corollary}{Corollary}[section]
\theoremstyle{thmstyletwo}%
\newtheorem{remark}{Remark}[section]%
\newtheorem{problem}{Problem}[section]
\theoremstyle{thmstylethree}%
\newtheorem{definition}{Definition}[section]%
\newcommand{\bb}[1]{\textcolor{blue}{#1}}
\newcommand{\ignore}[1]{}
\begin{document}

\title[A Fast Randomized Algorithm for Computing an Approximate Null Space]{A Fast Randomized Algorithm for Computing an Approximate Null Space}


\author*[1]{\fnm{Taejun} \sur{Park}}\email{park@maths.ox.ac.uk}

\author[1]{\fnm{Yuji} \sur{Nakatsukasa}}\email{nakatsukasa@maths.ox.ac.uk}

\affil*[1]{\orgdiv{Mathematical Institute}, \orgname{University of Oxford}, \orgaddress{\street{Woodstock Road} \city{Oxford}, \postcode{OX2 6GG}, \country{UK}}}


\abstract{Randomized algorithms in numerical linear algebra can be fast, scalable and robust. This paper examines the effect of sketching on the right singular vectors corresponding to the smallest singular values of a tall-skinny matrix. We analyze a fast algorithm 
by Gilbert, Park and Wakin for finding the trailing right singular vectors using randomization by examining the quality of the solution using multiplicative perturbation theory. For an $m\times n$ ($m\geq n$) matrix, the algorithm runs with complexity $O(mn\log n +n^3)$ which is faster than the standard $O(mn^2)$ methods. In applications, numerical experiments show great speedups including a $30\times$ speedup for the AAA algorithm and $10\times$ speedup for the total least squares problem.}

\keywords{Multiplicative perturbation theory, null space, randomized algorithm, sketching, singular subspace}


\pacs[MSC Classification]{65F20, 65F55, 15A18, 15A42}

\maketitle

\section{Introduction}
\label{sec:intro}
The right singular vector(s) corresponding to the smallest singular value(s) is used for finding the null space of a tall-skinny matrix $A$, solving total least squares problems \cite{tls80,tls91} and finding a rational approximation via the AAA algorithm \cite{aaa}, among others. In this work, we study the randomized algorithm from \cite{exist} by analyzing the accuracy of the result using multiplicative perturbation theory developed by Li \cite{li2}. 

Randomized algorithms in numerical linear algebra have proved to be very useful, improving speed and scalability of linear algebra problems \cite{hmt,mt20}. In particular, the sketching techniques have been shown to be a powerful tool in problems such as low-rank approximation, least squares problems, linear systems and eigenvalue problems \cite{nt21,woodruff14}. In particular, for low-rank approximation problems, randomized SVD \cite{cw09,hmt,naka20,tropp17} has been very successful. They give a near-optimal solution to the low-rank matrix problems at a lower complexity than traditional methods. These algorithms focus on approximating the top few singular values and their corresponding singular vectors. On the other hand, relatively little attention has been paid to the \emph{bottom} singular values and their corresponding singular vectors. These are often needed in problems such as total least squares \cite{tls80,tls91}. In particular, the right singular vector(s) corresponding to the zero singular value span the null space. Also, the singular vector corresponding to the smallest singular value minimizes the norm of the error or residual, for example, in total least squares problems \cite{tls80} and the AAA algorithm for rational approximation \cite{aaa}.

In this work, we will focus on the smallest (few) singular value(s) and their corresponding right singular vector(s). For a tall-skinny matrix $A\in \mathbb{R}^{m\times n}$, they are the solution to the following optimization problem
\begin{equation} \label{nsprob}
    \min_{V^*V=I_k} \norm{AV}_\text{F}.
\end{equation}
The standard method for computing the null space or the last right singular vector corresponding to the smallest singular value is to use the SVD or rank-revealing factorizations such as RRQR factorizations \cite{rrqr87,rrqr92}, which costs $O(mn^2)$ flops for an $m\times n$ matrix $A$ with $m\geq n$. There are other methods such as the TSQR \cite{TSQR} and Cholesky QR which has the same complexity $O(mn^2)$, but can be improved with parallelization. In addition, Cholesky QR uses the Gram matrix which squares the condition number. The sketch-and-solve method in this paper will give us a theoretical flop count of $O(mn\log n+n^3)$ and possibly even lower when the matrix is structured, e.g. sparse. In this method we left-multiply the original matrix $A\in \mathbb{C}^{m\times n}$ with $m\geq n$ by a random sketching matrix $S\in \mathbb{C}^{s\times m}$ where $m\geq s\geq n$. The integer $s$ is called the sketch size. Typically we have $m\gg s = cn$ where $c>1$ is a modest constant, say $c=2$ or $c=4$. We then work with the matrix $SA \in \mathbb{C}^{s\times n}$ by taking its SVD, and finding its trailing singular vectors. Since $SA$ is a smaller-sized matrix, which contains a compressed information of $A$ (See Sections \ref{sec:problem} and \ref{sec:RPT}), $SA$ can act as a good substitute for $A$ in some settings, for example, if $A$ is too large to fit in memory or when $A$ is streamed. This makes the computation more efficient in terms of both speed and storage. The question of course is to examine the quality of singular vectors obtained this way.

The sketch-and-solve method is usually only useful if the sketching matrix preserves geometry in the sense that the norm of every vector in a span of $A$ is approximately preserved under sketching \cite{mt20,woodruff14}, i.e., 
$1-\epsilon \leq \norm{SAx}_2/\norm{Ax}_2 \leq 1+\epsilon$ for some $0<\epsilon<1$, for every nonzero $x\in \mathbb{C}^n$. A large class of sketching matrices are known to preserve geometry under mild conditions, including Gaussian matrices \cite{mp}, subsampled randomized trignometric transforms (SRTT) \cite{hmt,tropp11}, CountSketch \cite{cw13,woodruff14} and 1-hashed randomized Hadamard transform (H-RHT) \cite{cartis21}. Different sketching matrices have different requirements for the size of the sketch $s$. For example, we require $s = \Theta( n\log n)$ for SRTTs while for Gaussian matrices and H-RHT we require the optimal $s = \Theta(n)$ to ensure that geometry is preserved with high probability. The details about the failure probability and the conditions under which theoretical guarantees can be achieved can be found in the papers cited above. 

For the sketch-and-solve method, the quality of the solution needs to be assessed. Since subspace embeddings preserve norms from the original space, it is straightforward to see (Theorem~\ref{thm:4timesres}) that the residual norm in Equation \eqref{nsprob} for the sketch-and-solve method will be on the same order as the residual norm of the actual solution. However, this does not immediately imply that the the two solution vectors are close to each other. In this work, we will assess the quality of the solution by deriving bounds for the sine of the angle between the sketched solution and the original solution. Specifically, we will quantify this bound using multiplicative perturbation theory from \cite{li2} by Li. The perturbation is multiplicative because the original matrix gets multiplied by a sketching matrix rather than undergoing an additive perturbation. This is different from the classical perturbation theory \cite{dk70,wedin1972}, which is additive. The classical result scales poorly for small singular values when the perturbation is close to a unitary matrix, whereas the multiplicative perturbation theory can overcome this issue.

\subsection{Existing work and Contribution}
Gilbert, Park and Wakin 
 \cite{exist} discuss the same algorithm where they analyze the accuracy of the singular values and the right singular vectors obtained using the sketch $SA$. They use the 2-norm error to quantify the accuracy of the right singular vectors obtained this way, whereas we use the canonical angles which is arguably more natural in this setting. Furthermore, we extend the analysis to comparing subspaces of either the same or different dimensions (Section \ref{sec:RPT}). This is particularly useful if we are extracting a subspace from the sketch $SA$ rather than a single vector, for example, when we are solving total least squares problems with multiple right-hand sides (Section \ref{sec:app}). It is also important to mention that if we want to extract the right singular subspace of dimension larger than one corresponding to a multiple singular value (for example, singular values corresponding to zero in the case of computing the null space of a matrix) then the bound used to quantify the accuracy of a single singular vector in the right singular subspace is useless as the gap is zero, but when we compare the subspace as a whole, we can get meaningful bound as will be shown in Section \ref{sec:RPT}. Moreover, when the gap between the target singular value and the rest is small, it is well known that the corresponding target singular vector is ill-conditioned~\cite{wedin1972}, because the condition number of \bb{computing} a singular vector is inversely proportional to the gap. In such cases it is often difficult to obtain nontrivial bounds for the accuracy; however, we show that useful bounds can be obtained by examining the angle between the target vector(s) and a computed subspace of different (larger) dimension.

\subsection{Notation}
Throughout, the symbol $^*$ is used to denote the (conjugate) transpose of a vector or a matrix. We use $\norm{\cdot}$ for a unitarily invariant norm, $\norm{\cdot}_2$ for the spectral norm or the vector-$\ell_2$ norm and $\norm{\cdot}_\text{F}$ for the Frobenius norm. Unless specified otherwise $\sigma_i(A)$ denotes the $i$th largest singular value of the matrix $A$. Lastly, we use MATLAB style notation for matrices and vectors. For example, for the $k$th to $(k+j)$th columns of a matrix $A$ we write $A(:,k:k+j)$.


\section{Problem statement and the algorithm}
\label{sec:problem}
Let us formally define the problem.
\begin{problem}[Null Space Problem] \label{problem}
Let $A\in \mathbb{C}^{m\times n}$ where $m\geq n$ and $k\in \mathbb{Z}^+$. Find $W\in \mathbb{C}^{n\times k}$ that solves the following optimization problem
\begin{equation*}
    \min_{V^*V=I_k} \norm{AV}_\mathrm{F}.
\end{equation*}
\end{problem}
This problem statement does not find the null space of $A$ exactly; indeed the null space of $A$ is usually the trivial $0$, but to consider a broader class of problems we will call this problem the null space problem. The solution, $W$ in the problem statement, to the null space problem is the trailing $k$ right singular vectors, that is, the $k$ right singular vectors corresponding to the smallest $k$ singular values. $W$ is unique up to reordering of columns when the corresponding singular values are distinct from each other and also distinct from the other $(n-k)$ singular values. The standard way to calculate this is by computing the SVD and extracting the trailing $k$ right singular vectors. This costs $O(mn^2)$ flops. For $m\gg n$, computing the SVD can become expensive, so we use sketching matrices to get a near-optimal solution with a lower complexity.

The most widely used class of sketching matrices for analysis is Gaussian random matrices, whose entries are independent standard normal random variables. However, Gaussian matrices are not always the most efficient choice, so other sketching matrices such as the SRFT matrices \cite{hmt} are often used in practice. Here we discuss the Gaussian and the SRFT matrices. For Gaussian matrices we have the following.
\begin{theorem}[Mar$\check{\text{c}}$enko and Pastur \cite{mp}, Davidson and Szarek \cite{ds01}] \label{MP}
Consider an $s\times n$ Gaussian random matrix $G$ with $s \geq n$. Then
\begin{equation*}
    \sqrt{s}-\sqrt{n} \leq \mathbb{E}[\sigma_{\min}(G)] \leq \mathbb{E}[\sigma_{\max}(G)] \leq \sqrt{s}+\sqrt{n}.
\end{equation*}
Furthermore, for any $t>0$,
\begin{equation*}
    \max\Big\{\mathbb{P}\big(\sigma_{\max}(G)\geq \sqrt{s}+\sqrt{n}+t\big), \mathbb{P}\big(\sigma_{\min}(G)\leq \sqrt{s}-\sqrt{n}-t\big)\Big\} \leq \exp{(-t^2/2)}
\end{equation*}
\end{theorem}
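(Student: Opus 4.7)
The plan is to treat the expectation bounds and the tail bounds separately, deriving the expectation bounds via Gaussian comparison inequalities of Slepian/Gordon type and then promoting them to tail bounds via Gaussian concentration of measure.

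For the expectation bounds I would use the variational characterisations
\begin{equation*}
\sigma_{\max}(G) = \sup_{u\in S^{s-1},\,v\in S^{n-1}} u^{*}Gv, \qquad
\sigma_{\min}(G) = \inf_{v\in S^{n-1}}\sup_{u\in S^{s-1}} u^{*}Gv,
\end{equation*}
(the latter using $s\geq n$ so that the infimum over $v$ is attained). Define the Gaussian process $X_{u,v} := u^{*}Gv$ and the auxiliary Gaussian process $Y_{u,v} := \langle g,u\rangle + \langle h,v\rangle$ with $g\sim N(0,I_s)$ and $h\sim N(0,I_n)$ independent. A direct computation of the increment variances shows that
\begin{equation*}
\mathbb{E}|X_{u_1,v_1}-X_{u_2,v_2}|^{2} \leq \mathbb{E}|Y_{u_1,v_1}-Y_{u_2,v_2}|^{2},
\end{equation*}
with equality whenever $v_1=v_2$. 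Slepian's inequality then gives $\mathbb{E}\sigma_{\max}(G)\leq \mathbb{E}\sup_{u,v}Y_{u,v} = \mathbb{E}\|g\|+\mathbb{E}\|h\| \leq \sqrt{s}+\sqrt{n}$, while Gordon's min-max inequality (which exploits exactly the equality case above) gives $\mathbb{E}\sigma_{\min}(G) \geq \mathbb{E}\inf_{v}\sup_{u}Y_{u,v} = \mathbb{E}\|g\|-\mathbb{E}\|h\| \geq \sqrt{s}-\sqrt{n}$, where I use Jensen on $\mathbb{E}\|h\|\leq\sqrt{n}$ and a standard lower bound $\mathbb{E}\|g\|\geq \sqrt{s}/\sqrt{1+1/(2s)}$ or the trick of subtracting $h$ from $g$ via an independent copy.

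For the tail bounds I would observe that $G\mapsto \sigma_{\max}(G)$ and $G\mapsto \sigma_{\min}(G)$ are both $1$-Lipschitz as functions of the $sn$ entries of $G$ with respect to the Frobenius/Euclidean metric: by Weyl's perturbation bound for singular values,
\begin{equation*}
|\sigma_{\max}(G_1)-\sigma_{\max}(G_2)| \leq \|G_1-G_2\|_2 \leq \|G_1-G_2\|_{\mathrm{F}},
\end{equation*}
and the same holds for $\sigma_{\min}$. The Borell--Sudakov--Tsirelson concentration inequality for $1$-Lipschitz functions of standard Gaussian vectors then yields
\begin{equation*}
\mathbb{P}\bigl(f(G) - \mathbb{E} f(G) \geq t\bigr) \leq \exp(-t^{2}/2)
\end{equation*}
for any $t>0$. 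Applying this to $f = \sigma_{\max}$ together with the expectation bound $\mathbb{E}\sigma_{\max}\leq \sqrt{s}+\sqrt{n}$ gives the upper-tail estimate; applying it to $f = -\sigma_{\min}$ (also $1$-Lipschitz) with $\mathbb{E}\sigma_{\min}\geq \sqrt{s}-\sqrt{n}$ gives the lower-tail estimate.

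I expect the main technical obstacle to be the sup-inf side, namely producing the lower bound on $\mathbb{E}\sigma_{\min}(G)$. Slepian's inequality alone only handles suprema, and the equality condition $v_1=v_2$ in the increment comparison is precisely what is needed to apply Gordon's min-max refinement; verifying it cleanly requires expanding $\mathbb{E}|X_{u_1,v_1}-X_{u_2,v_2}|^{2} = \|u_1\|^{2}\|v_1\|^{2}+\|u_2\|^{2}\|v_2\|^{2}-2\langle u_1,u_2\rangle\langle v_1,v_2\rangle$ and comparing with the analogous expression for $Y$. Once that covariance comparison is in place, the rest of the argument is essentially bookkeeping, and the tail bounds fall out almost immediately from Gaussian concentration.
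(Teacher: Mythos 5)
The paper does not prove this theorem; it is a verbatim citation of Mar\v{c}enko--Pastur and Davidson--Szarek, so the relevant comparison is with the proof in those references. Your approach — Chevet-type processes plus Slepian/Gordon for the expectation bounds, then Borell--Sudakov--Tsirelson for the one-sided tails — is exactly the Davidson--Szarek argument, and the covariance comparison, the Lipschitz observation via Weyl, and the assembly of the tail bounds are all correct.

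One step, however, is not justified as written: the lower bound $\mathbb{E}\sigma_{\min}(G)\geq\sqrt{s}-\sqrt{n}$. Gordon gives $\mathbb{E}\sigma_{\min}(G)\geq \mathbb{E}\|g\|-\mathbb{E}\|h\|$ with $g\sim N(0,I_s)$, $h\sim N(0,I_n)$ independent, but $\mathbb{E}\|g\|<\sqrt{s}$ by strict Jensen, so the pair of estimates you cite — $\mathbb{E}\|h\|\leq\sqrt{n}$ together with $\mathbb{E}\|g\|\geq\sqrt{s}/\sqrt{1+1/(2s)}$ — yields only $\mathbb{E}\|g\|-\mathbb{E}\|h\|\geq\sqrt{s}/\sqrt{1+1/(2s)}-\sqrt{n}$, which is strictly less than $\sqrt{s}-\sqrt{n}$ (and is even negative when $s=n$). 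What is actually needed is the statement that $k\mapsto\sqrt{k}-\mathbb{E}\|g_k\|$ is non-increasing, so that $\sqrt{s}-\mathbb{E}\|g\|\leq\sqrt{n}-\mathbb{E}\|h\|$ for $s\geq n$; this follows, for instance, from the Gamma-function identity $\mathbb{E}\|g_{k+1}\|\cdot\mathbb{E}\|g_k\|=k$, but it is a genuine additional lemma, not a consequence of the two bounds you named. The rest of the argument is sound once this monotonicity fact (or an equivalent one) is supplied.
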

This theorem implies that rectangular Gaussian matrices with aspect ratio $s/n>1$, that is, more rows than columns, are well-conditioned with singular values that lie in $[\sqrt{s}-\sqrt{n}-t,\sqrt{s}+\sqrt{n}+t]$ with failure probability that decreases exponentially with $t$. Theorem \ref{MP} can be used to see that
\begin{equation*}
    \frac{\sigma_i(GA/\sqrt{s})}{\sigma_i(A)} = O(1)
\end{equation*} holds with high probability \cite{mt20} where $A$ is an $m\times n$ matrix and $G$ is an $s\times m$ Gaussian matrix with $m\geq s\geq n$. In other words, sketching approximately preserves the singular values of the original matrix. 

The cost for applying a Gaussian sketch to an $m\times n$ matrix is $O(mn^2)$ operations, which has the same order as most classical numerical linear algebra algorithms. This makes Gaussian sketches not very useful in practice. 

There is an analogous result for SRFT matrices \cite{hmt,tropp11} which require a slightly larger sketch size and come with failure probability that is higher than Gaussian matrices. An SRFT matrix is an $m\times s$ matrix with $m\geq s$ of the form $\Omega = \sqrt{\frac{m}{s}}DFR^*$ where $D$ is a random $m\times m$ diagonal matrix whose entries are independent and take $\pm 1$ with equal probability, $F$ is the $m\times m$ unitary discrete Fourier transform and $R$ is a random $s \times m$ matrix that restricts an $m$-dimensional vector to $s$ coordinates chosen uniformly at random. The analogous result is from \cite{hmt}.
\begin{theorem} \label{srft}
Let $U$ be an $m\times n$ orthonormal matrix and $\Omega$ an $m\times s$ SRFT matrix where the sketch size $s$ satisfies
\begin{equation*}
    4[\sqrt{n}+\sqrt{8\log(mn)}]^2\log n \leq s \leq m.
\end{equation*}
Then 
\begin{equation*}
    0.4\leq \sigma_n(\Omega^*U) \text{ and } \sigma_1(\Omega^*U) \leq 1.48
\end{equation*}
with failure probability at most $O(n^{-1})$.
\end{theorem}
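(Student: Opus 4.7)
The plan is the standard two-step argument for SRFT subspace embeddings due to Halko, Martinsson and Tropp. Setting $Y := F^{*} D U \in \mathbb{C}^{m \times n}$, we have
\begin{equation*}
    \Omega^{*} U = \sqrt{m/s}\, R Y,
\end{equation*}
and $Y^{*} Y = I_n$ because $F^{*} D$ is unitary. Thus $Y$ has orthonormal columns, and the task reduces to showing that the scaled row-subsampling $\sqrt{m/s}\, R Y$ preserves the singular values of $Y$ to within $[0.4, 1.48]$.

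The first step is an \emph{equilibration} bound: with high probability over the Rademacher diagonal $D$, no row of $Y$ is too large. Row $i$ of $Y$ is the Rademacher sum $\sum_{\ell=1}^{m} (F^{*})_{i\ell}\, D_{\ell\ell}\, U_{\ell,:}$, whose coefficients have modulus $1/\sqrt m$ and whose expected squared norm is $n/m$. A scalar Hoeffding-type tail bound applied to $\|Y_{i,:}\|_2^2$ (or equivalently a noncommutative Khintchine estimate) followed by a union bound over $i = 1, \ldots, m$ yields
\begin{equation*}
    \max_{1 \le i \le m} \|Y_{i,:}\|_2 \;\le\; \sqrt{n/m} + \sqrt{8 \log(mn) / m}
\end{equation*}
with failure probability $O(n^{-1})$.

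The second step is a \emph{subsampling} concentration. Conditioning on the event above, one writes $(m/s)\, Y^{*} R^{*} R Y = (m/s) \sum_{j=1}^{s} y_{i_j} y_{i_j}^{*}$, where $y_i$ is the $i$th row of $Y$ and the indices $i_1, \ldots, i_s$ are drawn uniformly from $\{1, \ldots, m\}$. Its expectation is $Y^{*} Y = I_n$, and a matrix Chernoff inequality (in the form of Tropp's user-friendly tail bounds) controls the deviation in terms of $M := \max_i \|y_i\|_2^2$. Rewriting the Step-one bound as $m M = [\sqrt n + \sqrt{8 \log(mn)}]^2$, the hypothesis on $s$ becomes $s \ge 4\, m M \log n$, which is precisely the regime in which matrix Chernoff forces the eigenvalues of $(m/s)\, Y^{*} R^{*} R Y$ into $[0.4^2, 1.48^2]$ with failure probability $O(n^{-1})$. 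Taking square roots then delivers the singular-value bounds on $\Omega^{*} U$.

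The main obstacle is quantitative rather than structural: extracting the sharp constants $0.4$ and $1.48$ (rather than the generic $1 \pm \epsilon$) requires careful tracking of the Chernoff tail, and the two failure events (the union bound over $m$ rows in Step one, and the matrix Chernoff tail in Step two) must be balanced so that the total failure probability remains $O(n^{-1})$. Since this statement is quoted verbatim from Halko, Martinsson and Tropp, I would ultimately cite their proof for the detailed constant-tracking instead of reproducing it in full.
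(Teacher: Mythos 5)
The paper does not prove this theorem itself: it is quoted directly from Halko, Martinsson and Tropp (cited as \cite{hmt}), with no proof reproduced in the text. Your sketch is an accurate summary of the HMT argument, and your decision to ultimately cite HMT for the constant-tracking is exactly what the paper does, so there is no discrepancy to report.

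For completeness, your two-step outline matches the structure of HMT's proof: (i) use the unitary rotation $F^{*}D$ to flatten the row norms of $U$, bounding $\max_i \lVert (F^{*}DU)_{i,:} \rVert_2$ via scalar Rademacher concentration plus a union bound over the $m$ rows, and (ii) condition on that event and apply a matrix Chernoff bound to the uniformly subsampled Gram matrix $(m/s)(RY)^{*}(RY)$, whose expectation is $I_n$. One small notational slip: with $y_i$ denoting the $i$th \emph{row} of $Y$, the rank-one summands should be $y_{i_j}^{*} y_{i_j}$ (an $n\times n$ outer product), not $y_{i_j} y_{i_j}^{*}$, which would be a scalar. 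Also, $R$ in the SRFT selects coordinates without replacement, which HMT handles with a slight variant of the Chernoff argument; worth noting if you were to write this out fully. Neither affects the correctness of the outline.
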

Therefore as long as the sketch size is about $4n\log n$, SRFT matrices will approximately preserve the singular values of the original matrix with a reasonable failure probability. Unfortunately, the $\log n$ factor cannot be removed in general \cite{hmt}. One way to remove the $\log n$ factor is to replace the $R$ matrix in SRFT by a random 1-hashing matrix, giving us H-RHT \cite{cartis21}.  Fortunately, with the SRFT sketch, the sketch size $s = 2n$ for $n\geq 20$ does well at preserving the length of the original space in most applications \cite{hmt,nt21}. The cost for applying the SRFT matrix to an $m\times n$ matrix is $O(mn\log n )$ operations using the subsampled FFT algorithm \cite{r08}, but it is much easier to get $O(mn \log m)$ operations in practical implementations. This is still lower than the Gaussian matrix as long as $m$ is only polynomially larger than $n$, so the SRFT matrix is often used for practical reasons. There are also sparse sketching matrices which take advantage of the zero entries of the original matrix. An example is the CountSketch matrix \cite{cw13,woodruff14}. However, in this paper, we will focus on general matrices. Now we approach the null space problem using a sketch-and-solve method.
\begin{algorithm}
\caption{Solving the null space problem using sketch-and-solve}
\label{nsalg}
\begin{algorithmic}[1]
\Require{$A \in \mathbb{C}^{m\times n}$ with $m\gg n$, $s$ $(>n)$ the sketch size (rec. $s = 2n$), $k$ the number of right singular vectors desired}
\Ensure{$W\in \mathbb{C}^{n\times k}$ the trailing $k$ right singular vectors}
\State Draw a random sketch $S\in \mathbb{C}^{s\times m}$ with sketch size $s$
\State $SA$ = Sketch (e.g. SRFT) $A$ using $S$
\State $[U,\Sigma,V] = \text{SVD}(SA)$
\State $W = V(:,n-k+1:n)$, the trailing $k$ right singular vectors
\end{algorithmic}
\end{algorithm}

Algorithm \ref{nsalg} sketches the matrix $A$ and then computes the SVD of a smaller-sized matrix $SA$ to get the approximate right singular vectors corresponding to the smallest few singular values. This algorithm is not new and to our knowledge appeared first in \cite{exist}. 
This algorithm gives us a near-optimal solution with respect to the residual because the output obtained from Algorithm \ref{nsalg} is at most a modest constant larger than the optimal solution. This is made precise for the SRFT matrix in the following theorem; the sketch size $s=2n$ is not enough for theoretical guarantees, but works well in practice.
\begin{theorem}\label{thm:4timesres}
    Let $A\in \mathbb{C}^{m\times n}$ and $\tilde{A} = \Omega^* A \in \mathbb{C}^{s\times n}$ with $(m\geq s\geq n)$ where $\Omega \in \mathbb{C}^{m\times s}$ is the SRFT matrix with the sketch size $s$ satisfying 
    \begin{equation*}
    4[\sqrt{n}+\sqrt{8\log(mn)}]^2\log n \leq s \leq m.
\end{equation*} Then
    \begin{equation*}
         \norm{A\tilde{W}}_\text{F}  < 4\norm{AW}_\text{F}
    \end{equation*} with failure probability at most $O(n^{-1})$ where $\tilde{W}$ is the output from Algorithm \ref{nsalg} and $W$ is the exact solution to the null space problem (Problem \ref{problem}).
\end{theorem}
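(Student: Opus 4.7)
The key observation is that Theorem~\ref{srft} implies that $\Omega^*$ acts as an approximate isometry on the column space of $A$, in both the $2$-norm and the Frobenius norm. This, combined with the fact that $\tilde W$ is optimal for the \emph{sketched} problem while $W$ is optimal for the \emph{original} problem, yields the desired inequality almost immediately.

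First, I would take an SVD $A = U\Sigma V^*$ with $U\in\mathbb{C}^{m\times n}$ having orthonormal columns. For any $x\in\mathbb{C}^n$, setting $y = \Sigma V^* x$ gives $\|Ax\|_2 = \|y\|_2$ and $\|\Omega^* A x\|_2 = \|\Omega^* U y\|_2$. By Theorem~\ref{srft}, under the stated condition on $s$, we have $0.4\|y\|_2 \le \|\Omega^* U y\|_2 \le 1.48\|y\|_2$ with failure probability at most $O(n^{-1})$. Combining these gives
\begin{equation*}
    0.4\,\|Ax\|_2 \;\le\; \|\Omega^* A x\|_2 \;\le\; 1.48\,\|Ax\|_2 \qquad \text{for every } x\in\mathbb{C}^n.
\end{equation*}
Applying this columnwise and squaring then summing, the same two constants carry over to the Frobenius norm, so that $0.4\,\|AV\|_\text{F} \le \|\Omega^* A V\|_\text{F} \le 1.48\,\|AV\|_\text{F}$ for every $V\in \mathbb{C}^{n\times k}$ (in particular for those with $V^*V = I_k$).

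Next, I would invoke the optimality of $\tilde W$ for the sketched null space problem: since $\tilde W$ is the trailing $k$ right singular vectors of $\Omega^* A$, it minimizes $\|\Omega^* A V\|_\text{F}$ over $V^*V=I_k$. In particular, $\|\Omega^* A \tilde W\|_\text{F} \le \|\Omega^* A W\|_\text{F}$. Chaining the three inequalities yields
\begin{equation*}
    0.4\,\|A\tilde W\|_\text{F} \;\le\; \|\Omega^* A \tilde W\|_\text{F} \;\le\; \|\Omega^* A W\|_\text{F} \;\le\; 1.48\,\|AW\|_\text{F},
\end{equation*}
so $\|A\tilde W\|_\text{F} \le (1.48/0.4)\|AW\|_\text{F} = 3.7\,\|AW\|_\text{F} < 4\,\|AW\|_\text{F}$, with failure probability at most $O(n^{-1})$ inherited from Theorem~\ref{srft}.

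There is not really a hard obstacle here: the only slightly subtle step is checking that the subspace embedding property stated for orthonormal $U$ transfers cleanly to a two-sided bound on $\|Ax\|_2$ versus $\|\Omega^* A x\|_2$ for arbitrary $x$, which I handled above by absorbing $\Sigma V^*$ into the free vector $y$. Everything else is algebraic rearrangement, use of Frobenius additivity over columns, and the elementary numerical bound $1.48/0.4 = 3.7 < 4$.
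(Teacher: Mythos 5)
Your proof is correct and leads to the same constant $1.48/0.4 = 3.7 < 4$, but the middle of the chain takes a slightly different route from the paper. Both arguments start with the subspace embedding bound $0.4\,\|A\tilde W\|_\text{F} \le \|\Omega^* A\tilde W\|_\text{F}$ and end with a $1.48$ factor. In the paper, the middle step observes that $\|\Omega^* A\tilde W\|_\text{F}$ equals the root-sum-of-squares of the $k$ trailing singular values of $\Omega^* A$ (since $\tilde W$ \emph{is} the trailing right singular subspace of the sketch), and then invokes Ostrowski's theorem for singular values, $\sigma_i(\Omega^* A)\le\sigma_{\max}(\Omega^* U)\,\sigma_i(A)$, to compare these term-by-term with the trailing singular values of $A$, i.e.\ with $\|AW\|_\text{F}$. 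You instead use the \emph{optimality} of $\tilde W$ for the sketched objective ($\|\Omega^* A\tilde W\|_\text{F}\le\|\Omega^* A W\|_\text{F}$) followed by a second application of the embedding bound, now to the fixed matrix $AW$. Your version is the textbook sketch-and-solve argument and is marginally more self-contained, since it avoids Ostrowski; the paper's version gives the stronger by-product that each individual singular value of $\Omega^* A$ is within a constant factor of the corresponding singular value of $A$, which it reuses elsewhere (e.g.\ in the a priori bounds of Section~3). For the statement of Theorem~2.3 itself, the two routes are interchangeable.
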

\begin{proof}
Let $A = U_A \Sigma_A V_A^*$ be a thin SVD of $A$ such that $U_A\in \mathbb{C}^{m\times n}$ and $\Sigma_A,V_A\in \mathbb{C}^{n\times n}$. Then by Ostrowski's theorem for singular values \cite{ostrowski} we have
\begin{equation*}
    \sigma_{\min}(\Omega^* U_A)\sigma_i (A) \leq \sigma_i (\Omega^* A) \leq \sigma_{\max}(\Omega^* U_A)\sigma_i (A)
\end{equation*} for $i = 1,2,...,n$ and
\begin{equation*}
\sigma_{\min}(\Omega^*U_A)\norm{A\tilde{W}}_\text{F} \leq \norm{\Omega^*A\tilde{W}}_\text{F}.
\end{equation*} Therefore, by Theorem \ref{srft}, with failure probability at most $O(n^{-1})$ we have
\begin{align*}
    \norm{A\tilde{W}}_\text{F} &\leq \frac{1}{\sigma_{\min}(\Omega^*U_A)}\norm{\Omega^*A\tilde{W}}_\text{F} \\ &= \frac{1}{\sigma_{\min}(\Omega^*U_A)} \sqrt{\sum_{j=1}^k\sigma_{n-j+1}^2(\Omega^*A)} \\ &\leq \frac{\sigma_{\max}(\Omega^*U_A)}{\sigma_{\min}(\Omega^*U_A)} \sqrt{\sum_{j=1}^k \sigma_{n-j+1}^2(A)} \\ &= \frac{1.48}{0.4}\norm{AW}_\text{F} <4\norm{AW}_\text{F}
\end{align*}
\end{proof}
\begin{remark}
A similar result can be obtained for other sketching matrices. In general, if the sketching matrix $S$ satisfies
\begin{equation*}
    (1-\delta)\sigma_i (A) \leq \sigma_i(SA) \leq (1+\delta) \sigma_i (A)
\end{equation*} for some $0 <\delta <1$ with high probability then
\begin{equation*}
    \norm{AW_S}_\text{F}  \leq \frac{1+\delta}{1-\delta}\norm{AW}_\text{F}
\end{equation*} with high probability where $W_S$ is the output of Algorithm \ref{nsalg} when using $S$ as the sketching matrix. (e.g. $\delta = \frac{1}{\sqrt{2}}$ for a Gaussian sketching matrix with the sketch size $s = 4n$)
\end{remark}

There is also a version of Algorithm \ref{nsalg} with $\epsilon$ tolerance rather than taking $k$ as input. This version finds all the singular values that are less than $\epsilon$ and set $W$ to be their corresponding right singular vectors. If we set $\epsilon = O(u)$ where $u$ is the unit roundoff, then we get the numerical null space.

For a sketch size $s=cn$ for a constant $c>0$, the complexity of Algorithm 1 is $O(mn\log n)$ for performing the sketch (line $2$) and $O(n^3)$ for calculating the trailing right singular vectors of $SA$ (line $3$). This gives us an overall complexity of $O(mn\log n+n^3)$ which is faster than the traditional methods, $O(mn^2)$. Now we look at the quality of the solution. We will examine how much the sketched solution (output $W$ of Algorithm \ref{nsalg}) deviates from the original solution (SVD of $A$).


\section{Accuracy of the sketch using multiplicative perturbation bounds}
\label{sec:RPT}
The standard way of quantifying the distance between two subspaces is to use the \emph{canonical angles} between subspaces.
\begin{definition}[Canonical angles {\cite[\S~I.5]{mpt}}]
Let $U,V\in \mathbb{C}^{n\times k}$ with $n\geq k$ be two matrices with orthonormal columns. Then the canonical angles between the subspaces spanned by $U$ and $V$ are $\{\theta_i\}_{i=1}^k$ where $\theta_i = \arccos(\sigma_i(U^* V))$, that is, the arccosine of the singular values of $U^*V$. We let $\Theta(U,V) = \text{diag}(\theta_1,...,\theta_k)$ and $\sin\Theta(U,V)$ and $\cos\Theta(U,V)$ be defined elementwise for the diagonal entries only.
\end{definition}
\begin{remark}\ 
\begin{enumerate}
    \item $\Theta(U,V) = \Theta(V,U)$ since $U^*V$ and $V^*U$ have the same singular values.
    \item By the CS decomposition (Chapter I Section 5, \cite{mpt}), the elements in the diagonal of $\sin\Theta(U,V)$ are the singular values of $U_\perp^*V$ or $U^*V_\perp$ where $U_\perp$ and $V_\perp$ have columns that give orthonormal bases to the orthogonal complements of range$(U)$ and range$(V)$ respectively. This implies that $\sin\Theta(U,V)$ and $\sin\Theta(U_\perp,V_\perp)$ have the same nonzero entries on their diagonal.
\end{enumerate}
\end{remark}

Now we look at the perturbation of right singular vectors using canonical angles. There are two different types of perturbations, namely additive and multiplicative perturbations. The results for additive perturbation was first proved in 1970 by Davis and Kahan \cite{dk70} for eigenvectors of symmetric matrices, and two years later Wedin \cite{wedin1972} derived analogous results for singular vectors. In this work, we will focus on multiplicative perturbations that give multiplicative perturbation bounds by Li \cite{li2}, as they arise naturally in our context, and give superior bounds in our setting. This type of bound in our setting has been studied in \cite{exist} for vectors, however there is no known study in our context for subspaces of dimension larger than $1$, which can be useful when we compute subspaces rather than vectors. 

\subsection{Computing subspaces of the same dimension}
Let $A\in \mathbb{C}^{m\times n}$ and $\tilde{A} \in \mathbb{C}^{s\times n}$ be matrices with $m\geq s\geq n$ and suppose that they have SVDs of the form
\begin{equation} \label{svdA}
    A = U\Sigma V^* = [U_1, U_2] 
    \begin{bmatrix}
\Sigma_1 & 0\\
0 & \Sigma_2
\end{bmatrix} [V_1, V_2]^*
\end{equation}
\begin{equation} \label{svdAtil}
    \tilde{A} = \tilde{U}\tilde{\Sigma}\tilde{V}^* = [\tilde{U}_1, \tilde{U}_2] 
    \begin{bmatrix}
\tilde{\Sigma}_1 & 0\\
0 & \tilde{\Sigma}_2
\end{bmatrix} [\tilde{V}_1, \tilde{V}_2]^*
\end{equation}
where $U\in \mathbb{C}^{m\times n}$, $\tilde{U}\in \mathbb{C}^{s\times n}$, $V,\tilde{V}\in \mathbb{C}^{n\times n}$ and the matrices with subscript $1$ have $(n-k)$ columns and subscript $2$ have $k$ columns with $k<n$ and 
\begin{equation} \label{singvalA}
    \Sigma_1 = \text{diag}(\sigma_1,...,\sigma_{n-k}), \hspace{0.5cm} \Sigma_2= \text{diag}(\sigma_{n-k+1},...,\sigma_n),
\end{equation}
\begin{equation} \label{singvalAtil}
    \tilde{\Sigma}_1 = \text{diag}(\tilde{\sigma}_1,...,\tilde{\sigma}_{n-k}), \hspace{0.5cm} \tilde{\Sigma}_2= \text{diag}(\tilde{\sigma}_{n-k+1},...,\tilde{\sigma}_n)
\end{equation}
where the singular values are ordered in non-increasing order.

Next we define a relative gap $\chi$. Let $a,b\in \mathbb{R}$ and define
\begin{equation*}
    \chi(a,b) = \frac{\abs{a-b}}{\sqrt{\abs{ab}}}
\end{equation*}
with the convention $0/0 = 0$ and $1/0 = \infty$. Note that $\chi$ is not a metric on $\mathbb{R}$ \cite{li1,li2}. We also recall a useful matrix norm inequality from \cite{mirsky1960}, which states that for any matrices $A,B$ and $C$ such that the product $ABC$ is defined, we have
\begin{equation*}
    \norm{ABC}\leq \norm{A}_2\norm{B}\norm{C}_2
\end{equation*} for any unitarily invariant norm $\norm{\cdot}$. Lastly, we review a key lemma from Li \cite{li2}.
\begin{lemma} \label{relsinlemma}
Let $A\in \mathbb{C}^{s\times s}$ and $B\in \mathbb{C}^{t\times t}$ be two positive semi-definite Hermitian matrices and let $E\in \mathbb{C}^{s\times t}$. Suppose that there exists $\alpha>0$ and $\delta>0$ such that 
\begin{equation*}
    \norm{A}_2\leq \alpha \text{ and }\norm{B^{-1}}_2^{-1}\geq \alpha+\delta
\end{equation*} 
or 
\begin{equation*}
     \norm{B}_2\leq \alpha \text{ and }\norm{A^{-1}}_2^{-1}\geq \alpha+\delta.
\end{equation*} 
Then the Sylvester equation $AX-XB = A^{1/2}EB^{1/2}$ has a unique solution $X\in\mathbb{C}^{s\times t}$, and moreover $\norm{X}\leq \norm{E}/\chi(\alpha,\alpha+\delta)$ for any unitarily invariant norm.
\end{lemma}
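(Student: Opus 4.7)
The plan is to reduce to a componentwise formulation by diagonalizing $A$ and $B$, exploit a monotonicity of $\chi$, and then lift the pointwise bound to a unitarily invariant norm bound. By symmetry (take adjoints and swap $A\leftrightarrow B$), it suffices to treat the first hypothesis, so I assume $\|A\|_2\le\alpha$ and $\lambda_{\min}(B)\ge\alpha+\delta$. Then $\sigma(A)\cap\sigma(B)=\emptyset$, which by the classical criterion gives existence and uniqueness of the solution $X$. Writing spectral decompositions $A=U\Lambda_A U^*$ and $B=V\Lambda_B V^*$ and conjugating reduces the equation to $\Lambda_A Y-Y\Lambda_B=\Lambda_A^{1/2}F\Lambda_B^{1/2}$ where $Y=U^*XV$ and $F=U^*EV$, which has the entrywise solution
\begin{equation*}
Y_{ij}=\frac{\sqrt{a_i b_j}}{a_i-b_j}\,F_{ij},
\end{equation*}
and $|\sqrt{a_i b_j}/(a_i-b_j)|=1/\chi(a_i,b_j)$ by definition.

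Next I would verify the uniform entrywise bound $|Y_{ij}|\le|F_{ij}|/\chi(\alpha,\alpha+\delta)$. On the rectangle $a\in(0,\alpha]$, $b\ge\alpha+\delta$, differentiating $\chi(a,b)=(b-a)/\sqrt{ab}$ gives $\partial_a\chi<0$ and $\partial_b\chi>0$, so $\chi$ attains its minimum on this region at the corner $(a,b)=(\alpha,\alpha+\delta)$; the boundary case $a_i=0$ is trivial since then $Y_{ij}=0$.

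The main obstacle is promoting this pointwise inequality to a bound in a general unitarily invariant norm, since the Hadamard multiplier $H_{ij}=\sqrt{a_ib_j}/(a_i-b_j)$ does not automatically act with norm $\max|H_{ij}|$. My plan is to use the operator integral representation
\begin{equation*}
X=-\int_0^\infty A^{1/2}e^{tA}\,E\,B^{1/2}e^{-tB}\,dt,
\end{equation*}
obtained from $1/(a_i-b_j)=-\int_0^\infty e^{(a_i-b_j)t}\,dt$ (valid because $b_j>a_i$) together with $\sqrt{a_ib_j}=\sqrt{a_i}\sqrt{b_j}$. Applying the inequality $\|PEQ\|\le\|P\|_2\|E\|\|Q\|_2$ (which holds for any unitarily invariant norm) to each $t$-slice and integrating reduces the problem to estimating the scalar integral $\int_0^\infty\|A^{1/2}e^{tA}\|_2\,\|B^{1/2}e^{-tB}\|_2\,dt$. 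The subtle point I expect to be hardest is obtaining the prefactor exactly $1/\chi(\alpha,\alpha+\delta)=\sqrt{\alpha(\alpha+\delta)}/\delta$ rather than a slightly larger constant, because $\max_{b\ge\alpha+\delta}\sqrt{b}\,e^{-tb}$ depends on whether the critical point $b^*=1/(2t)$ lies inside $[\alpha+\delta,\infty)$. If the integral does not close cleanly, an alternative is the partial fraction identity $\sqrt{ab}/(b-a)=\tfrac12[\sqrt{a}/(\sqrt{b}-\sqrt{a})+\sqrt{a}/(\sqrt{b}+\sqrt{a})]$, which decomposes $X$ as the sum of solutions to a Sylvester equation and a Lyapunov equation in $\sqrt{A},\sqrt{B}$; each admits textbook unitarily invariant norm bounds in terms of the gap $\sqrt{\alpha+\delta}-\sqrt{\alpha}$ and $\lambda_{\min}(\sqrt{B})=\sqrt{\alpha+\delta}$ respectively. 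Combining these with $\|X\|=\|Y\|$ and $\|E\|=\|F\|$ (unitary invariance) yields the claimed bound $\|X\|\le\|E\|/\chi(\alpha,\alpha+\delta)$.
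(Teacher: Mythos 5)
The paper itself does not prove Lemma~\ref{relsinlemma}; it simply quotes it from Li \cite{li2}. So what matters is whether your argument actually closes, and it does not quite. Your reduction to the componentwise equation $Y_{ij}=\frac{\sqrt{a_ib_j}}{a_i-b_j}F_{ij}$, the existence/uniqueness from disjoint spectra, the reduction by symmetry to one hypothesis, and the pointwise bound $\big|\sqrt{a_ib_j}/(a_i-b_j)\big|\le 1/\chi(\alpha,\alpha+\delta)$ are all correct, and you are right to flag that lifting a pointwise bound on a Schur multiplier to a unitarily invariant norm bound is the genuine difficulty. But neither of the two routes you propose yields the constant $1/\chi(\alpha,\alpha+\delta)$. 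For the exponential integral,
\begin{equation*}
\|X\|\le\|E\|\int_0^\infty\big\|A^{1/2}e^{tA}\big\|_2\,\big\|B^{1/2}e^{-tB}\big\|_2\,dt
=\|E\|\int_0^\infty\Big(\max_i\sqrt{a_i}\,e^{ta_i}\Big)\Big(\max_j\sqrt{b_j}\,e^{-tb_j}\Big)dt,
\end{equation*}
and because the two maxima are taken inside the integral this strictly exceeds $\max_{i,j}\sqrt{a_ib_j}/(b_j-a_i)$ as soon as $\sigma(B)$ is spread out: with $\alpha=\delta=1$, $A=(1)$, $B=\operatorname{diag}(2,100)$, the integral evaluates to roughly $1.47>\sqrt2=1/\chi(1,2)$. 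For the partial-fraction route, the Lyapunov piece $\sqrt{A}\,Y_2+Y_2\sqrt{B}=\sqrt{A}F$ is controlled by the textbook estimate only through $\|\sqrt{A}F\|/\big(\lambda_{\min}(\sqrt A)+\lambda_{\min}(\sqrt B)\big)\le\sqrt\alpha\,\|F\|/\sqrt{\alpha+\delta}$, since $\lambda_{\min}(\sqrt A)$ may be $0$; but the entrywise multiplier $\sqrt{a_i}/(\sqrt{a_i}+\sqrt{b_j})$ is maximized at $a_i=\alpha$, not $a_i=0$, and equals $\sqrt\alpha/(\sqrt\alpha+\sqrt{\alpha+\delta})$ there. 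The $\sqrt{a_i}$ in the numerator (from $\sqrt A F$) and in the denominator (from the gap) cannot both be estimated at their respective worst cases, so combining the two pieces gives a constant strictly larger than $1/\chi(\alpha,\alpha+\delta)$.

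The missing idea is the geometric-series decomposition of the multiplier. For $0<a<b$,
\begin{equation*}
\frac{\sqrt{ab}}{\,b-a\,}=\sum_{k=0}^\infty\Big(\frac{a}{b}\Big)^{k+\frac12},
\end{equation*}
so in the diagonalized frame (dropping rows with $a_i=0$, for which $Y_{ij}=0$, so $A$ need not be invertible) $Y=-\sum_{k\ge0}\Lambda_A^{\,k+\frac12}F\,\Lambda_B^{-(k+\frac12)}$, a sum of two-sided diagonal multiplications with nonnegative scalar weights. Each term has unitarily invariant norm at most $\big\|\Lambda_A^{\,k+\frac12}\big\|_2\,\|F\|\,\big\|\Lambda_B^{-(k+\frac12)}\big\|_2\le\big(\alpha/(\alpha+\delta)\big)^{k+\frac12}\|F\|$, and summing the geometric series gives exactly $\|Y\|\le\frac{\sqrt{\alpha(\alpha+\delta)}}{\delta}\|F\|=\|F\|/\chi(\alpha,\alpha+\delta)$. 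This is the mechanism behind Li's bound; your integral and partial-fraction attempts are valid bounds but both overshoot the target constant, whereas the rank-one expansion matches it exactly.
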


Now we prove a theorem that will be important for assessing the quality of the solution using the sketch-and-solve method. This is a modification of Theorem 4.8 in \cite{li2}. The modification allows multiplicative perturbation of an $m\times n$ matrix $A$ by a full row rank $s\times m$ rectangular matrix $X^*$ with $m\geq s\geq n$, whereas Li only considers non-singular square matrices. Our bound also tightens Li's bound by removing extra simplifications Li made in his proof. In our context, $A\in \mathbb{C}^{m\times n}$ corresponds to the original matrix and $X^*A \in \mathbb{C}^{s\times n}$ corresponds to the sketched matrix where $X$ is the sketching matrix.
\begin{theorem} \label{relsin}
Let $A\in \mathbb{C}^{m\times n}$ and $\tilde{A} = X^*A \in \mathbb{C}^{s\times n}$ ($m\geq s\geq n$) with thin SVDs as in Equations (\ref{svdA}, \ref{svdAtil}, \ref{singvalA}, \ref{singvalAtil}) where $X\in \mathbb{C}^{m \times s}$ is a matrix such that $X^*U$ has full rank. Let $X^*U=QR$ where $Q\in \mathbb{C}^{s\times n}$ and $R\in \mathbb{C}^{n\times n}$ be a thin QR factorization of $X^*U$ where $U$ is the orthonormal matrix from Equation \eqref{svdA}. Suppose that there exists $\alpha>0$ and $\delta>0$ such that 
\begin{equation*}
    \min_{1\leq i\leq n-k}\sigma_i \geq \alpha+\delta \hspace{1cm}\text{  and  }\hspace{1cm} \max_{1\leq j\leq k} \tilde{\sigma}_{n-k+j}\leq \alpha,
\end{equation*}
or 
\begin{equation*}
    \min_{1\leq i\leq n-k}\tilde{\sigma}_i \geq \alpha+\delta \hspace{1cm}\text{  and  }\hspace{1cm} \max_{1\leq j\leq k} \sigma_{n-k+j}\leq \alpha.
\end{equation*}
Then for any unitarily invariant norm we have
\begin{equation}\label{sinv2v2tilR}
    \norm{\sin\Theta(V_2,\tilde{V}_2)} \leq \frac{\norm{R-R^{-*}}}{\chi\big(\alpha^2,(\alpha+\delta)^2\big)}
\end{equation} where $V_2$ and $\tilde{V}_2$ are as in Equations (\ref{svdA},\ref{svdAtil}).
\end{theorem}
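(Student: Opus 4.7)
The plan is to reduce to a square $n\times n$ perturbation problem by a thin QR of $X^*U$, then derive a single Sylvester equation in the off-diagonal block $V_1^*\tilde V_2$ and apply Lemma~\ref{relsinlemma}. Concretely, use the thin QR factorization $X^*U = QR$ with $Q\in\mathbb{C}^{s\times n}$ having orthonormal columns and $R\in\mathbb{C}^{n\times n}$ upper triangular (invertible, by the full-rank hypothesis). Then $\tilde A = X^*A = Q(R\Sigma V^*)$, so $\tilde A$ and the square matrix $\hat A := R\Sigma V^*$ share their right singular vectors and singular values. Since $\mathrm{range}(\tilde U)\subseteq \mathrm{range}(\tilde A) = \mathrm{range}(Q)$, the matrix $U' := Q^*\tilde U$ is unitary and $\hat A = U'\tilde\Sigma\tilde V^*$ is an SVD of $\hat A$; in particular $\hat A\tilde V_2 = (Q^*\tilde U_2)\tilde\Sigma_2$ with $Q^*\tilde U_2$ having orthonormal columns. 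This is the key identity that will let us absorb the extraneous factors of $R$ into objects of spectral norm one.

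Setting $W_{12} := V_1^*\tilde V_2$, whose singular values equal the nonzero diagonal entries of $\sin\Theta(V_2,\tilde V_2)$, I would evaluate $V_1^*(\hat A^*\hat A - A^*A)\tilde V_2$ in two ways. On one hand, using $A^*AV_1 = V_1\Sigma_1^2$ and $\hat A^*\hat A\tilde V_2 = \tilde V_2\tilde\Sigma_2^2$, it equals $W_{12}\tilde\Sigma_2^2 - \Sigma_1^2 W_{12}$. On the other, $\hat A^*\hat A - A^*A = V\Sigma(R^*R - I)\Sigma V^*$, giving $[\Sigma_1,\,0](R^*R - I)\Sigma V^*\tilde V_2$. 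Equating produces the Sylvester identity
\begin{equation*}
\Sigma_1^2 W_{12} - W_{12}\tilde\Sigma_2^2 \;=\; -[\Sigma_1,\,0]\,(R^*R - I)\,\Sigma V^*\tilde V_2 .
\end{equation*}
The crucial algebraic step is the factorization $R^*R - I = (R - R^{-*})^*R$, chosen so that the trailing $R\Sigma V^*\tilde V_2 = \hat A\tilde V_2$ collapses to $(Q^*\tilde U_2)\tilde\Sigma_2$ via the SVD relation above. The right-hand side then assumes the clean form $\Sigma_1\, E\, \tilde\Sigma_2$ with $E = -[I,\,0](R - R^{-*})^*(Q^*\tilde U_2)$, and the Mirsky submultiplicative inequality from \cite{mirsky1960} gives $\|E\| \leq \|R - R^{-*}\|$ in any unitarily invariant norm since $[I,\,0]$ and $Q^*\tilde U_2$ have spectral norm $1$.

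Finally, apply Lemma~\ref{relsinlemma} with $F = \Sigma_1^2$ and $G = \tilde\Sigma_2^2$, both Hermitian PSD with $F^{1/2} = \Sigma_1$ and $G^{1/2} = \tilde\Sigma_2$. Case~(a) of the hypotheses translates to $\|G\|_2 \leq \alpha^2$ and $\|F^{-1}\|_2^{-1} \geq (\alpha+\delta)^2$, so the lemma (applied with $\alpha^2$ and $(\alpha+\delta)^2$ playing the roles of $\alpha$ and $\alpha+\delta$) delivers $\|W_{12}\| \leq \|E\|/\chi(\alpha^2,(\alpha+\delta)^2) \leq \|R - R^{-*}\|/\chi(\alpha^2,(\alpha+\delta)^2)$, which is the claimed bound; case~(b) follows by swapping the roles of $A$ and $\hat A$ and using $\sin\Theta(V_2,\tilde V_2) = \sin\Theta(\tilde V_2, V_2)$. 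The main obstacle is in the second step: the alternative factorization $R^*R - I = R^*(R - R^{-*})$ is equally valid but leaves the stray $R$ multiplying $\Sigma V^*\tilde V_2\tilde\Sigma_2^{-1} = R^{-1}U'_2$, producing a spurious $\kappa(R) = \|R\|_2\|R^{-1}\|_2$ factor in the bound. Exploiting the orthonormality of $Q^*\tilde U_2$ via the right factorization of $R^*R-I$ is what keeps the bound free of condition-number terms, and is presumably the ``extra simplification'' of Li's proof that the paper tightens.
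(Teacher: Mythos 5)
Your proposal is correct and is essentially the paper's own argument. The paper defines $B=\Sigma V^*$, $\tilde B=R\Sigma V^*$ (your $\hat A$), writes $\tilde B^*\tilde B - B^*B = \tilde B^*(R-R^{-*})B$, conjugates by $\tilde V^*,V$, and reads off the $(2,1)$ block to get the Sylvester equation in $\tilde V_2^*V_1$, then applies Lemma~\ref{relsinlemma}; you derive the conjugate-transpose of that same Sylvester equation, in $V_1^*\tilde V_2$, via the equivalent factorization $R^*R-I=(R-R^{-*})^*R$, and the rest is identical. Two minor remarks: the auxiliary claim that $Q^*\tilde U$ is unitary can fail when $\tilde A$ is rank-deficient, but it is not load-bearing — only $\|Q^*\tilde U_2\|_2\le 1$ and the identity $\hat A\tilde V_2=(Q^*\tilde U_2)\tilde\Sigma_2$ are used, and both hold regardless; and hypothesis (b) is handled by extracting the $(1,2)$ block (a Sylvester equation in $\tilde V_1^*V_2$ with $\tilde\Sigma_1,\Sigma_2$) rather than by any role-swap of $A$ and $\hat A$, though your conclusion is unaffected.
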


\begin{proof}
We define two matrices $B :=\Sigma V^*$ and $\tilde{B}:= R\Sigma V^*$. Notice that $A$ and $B$ have the same singular values and the right singular vectors. Also, note that since $X^*A = QR\Sigma V^*$, $\tilde{A}$ and $\tilde{B}$ have the same singular values and the right singular vectors, so to prove~\eqref{sinv2v2tilR} it suffices to work with $B$ and $\tilde{B}$.
Since $X$ is a full rank matrix, $R$ is invertible. We first note that
\begin{equation*}
    \tilde{B}^*\tilde{B} - B^*B = \tilde{B}^*RB-\tilde{B}^*R^{-*}B = \tilde{B}^*(R-R^{-*})B.
\end{equation*} This is equivalent to
\begin{align*}
    \tilde{V}\tilde{\Sigma}^2\tilde{V}^*-V\Sigma^2V^* = \tilde{V}\tilde{\Sigma}(Q^*\tilde{U})^*(R-R^{-*})\Sigma V^*,
\end{align*} since $\tilde{B} = Q^*\tilde{A} = Q^*\tilde{U}\tilde{\Sigma}\tilde{V}^*$. Right-multiplying $V$ and left-multiplying $\tilde{V}^*$ gives
\begin{align*}
    \tilde{\Sigma}^2\tilde{V}^*V-\tilde{V}^*V\Sigma^2 = \tilde{\Sigma}(Q^*\tilde{U})^*(R-R^{-*})\Sigma.
\end{align*}
Now define $M:= R-R^{-*}$ for shorthand then the above matrix equation can be represented as a $2\times 2$ block matrix as
\begin{align*}
    &\begin{bmatrix}
    \tilde{\Sigma}_1^2\tilde{V}_1^*V_1 -\tilde{V}_1^*V_1\Sigma_1^2 & \tilde{\Sigma}_1^2\tilde{V}_1^*V_2 - \tilde{V}_1^*V_2\Sigma_2^2 \\
    \tilde{\Sigma}_2^2\tilde{V}_2^*V_1 - \tilde{V}_2^*V_1\Sigma_1^2 & \tilde{\Sigma}_2^2\tilde{V}_2^*V_2 - \tilde{V}_2^*V_2\Sigma_2^2
    \end{bmatrix}  \\ &=
    \begin{bmatrix}
    \tilde{\Sigma}_1\tilde{U}_1^*QM
    \begin{bmatrix}
    \Sigma_1 \\ 0_{k\times (n-k)}
    \end{bmatrix} & \tilde{\Sigma}_1\tilde{U}_1^*QM
    \begin{bmatrix}
    0_{(n-k)\times k} \\ \Sigma_2
    \end{bmatrix}\\
    \tilde{\Sigma}_2\tilde{U}_2^*QM
    \begin{bmatrix}
    \Sigma_1 \\ 0_{k\times (n-k)}
    \end{bmatrix} & \tilde{\Sigma}_2\tilde{U}_2^*QM
    \begin{bmatrix}
    0_{(n-k)\times k} \\ \Sigma_2
    \end{bmatrix} 
    \end{bmatrix}  
\end{align*}
Taking the $(2,1)$-entry of the block matrix we get
\begin{equation*}
    \tilde{\Sigma}_2^2\tilde{V}_2^*V_1-\tilde{V}_2^*V_1\Sigma_1^2 = \tilde{\Sigma}_2\tilde{U}_2^*QM
    \begin{bmatrix}
    I_{n-k} \\ 0_{k\times (n-k)}
    \end{bmatrix}\Sigma_1.
\end{equation*}
This is in the form of the Sylvester equation in Lemma \ref{relsinlemma}. Thus, using Lemma \ref{relsinlemma} we get for any unitarily invariant norm
\begin{equation*}
    \norm{\tilde{V}_2^*V_1}\leq \frac{\norm{\tilde{U}_2^*QM
    \begin{bmatrix}
    I_{n-k} \\ 0_{k\times (n-k)}
    \end{bmatrix}}}{\chi\big(\alpha^2,(\alpha+\delta)^2\big)}.
\end{equation*}
Therefore
\begin{align*}
    \norm{\sin\Theta(V_2,\tilde{V}_2)} &=\norm{\tilde{V}_2^*V_1} \\ &\leq \frac{\norm{\tilde{U}_2^*QM
    \begin{bmatrix}
    I_{n-k} \\ 0_{k\times (n-k)}
    \end{bmatrix}}}{\chi\big(\alpha^2,(\alpha+\delta)^2\big)}\\
    &\leq \frac{\norm{\tilde{U}_2^*Q}_2\norm{M}\norm{\begin{bmatrix}
    I_{n-k} \\ 0_{k\times (n-k)}
    \end{bmatrix}}_2}{\chi\big(\alpha^2,(\alpha+\delta)^2\big)} \\
     &\leq \frac{\norm{M}}{\chi\big(\alpha^2,(\alpha+\delta)^2\big)} = \frac{\norm{R-R^{-*}}}{\chi\big(\alpha^2,(\alpha+\delta)^2\big)}
\end{align*} for any unitarily invariant norm.
\end{proof}
\begin{corollary} \label{gaussianbound}
In the setting of Theorem \ref{relsin}, we have
\begin{equation} \label{corsin}
    \norm{\sin\Theta(V_2,\tilde{V}_2)}_2 \leq \frac{2.1}{\chi(\alpha^2,(\alpha+\delta)^2)}
\end{equation} with failure probability at most $O(n^{-1})$ for an SRFT matrix $(X=\Omega)$ with the sketch size $s$ satisfying $4[\sqrt{n}+\sqrt{8\log(mn)}]^2\log n \leq s \leq m$. For the Gaussian case, that is, $X = G/\sqrt{s}$ where $G$ is a standard $m\times s$ Gaussian matrix, $\eqref{corsin}$ is satisfied with failure probability at most $\exp(-n/50)$ with the sketch size $s = 4n$.
\end{corollary}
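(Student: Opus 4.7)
The plan is to reduce the corollary to Theorem~\ref{relsin} plus a deterministic bound on $\|R - R^{-*}\|_2$, then control that quantity using the two probabilistic singular value estimates already stated in the excerpt (Theorems~\ref{MP} and~\ref{srft}). By Theorem~\ref{relsin}, the denominator $\chi(\alpha^2,(\alpha+\delta)^2)$ is already in place, so the whole task reduces to showing that $\|R-R^{-*}\|_2 \le 2.1$ with the claimed probabilities.

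The key observation is that if $R = U_R \Sigma_R V_R^*$ is an SVD, then $R^{-*} = U_R \Sigma_R^{-1} V_R^*$, which shares the singular vectors of $R$. Hence $R - R^{-*} = U_R(\Sigma_R - \Sigma_R^{-1}) V_R^*$ and
\[
\|R - R^{-*}\|_2 = \max_{1 \le i \le n}\left|\sigma_i(R) - \frac{1}{\sigma_i(R)}\right|.
\]
Moreover, because $X^*U = QR$ is a thin QR factorization, $\sigma_i(R) = \sigma_i(X^*U)$ for every $i$, so it suffices to bound the singular values of $X^*U$ away from $0$ and above. The scalar function $f(x) = |x - 1/x|$ is strictly decreasing on $(0,1]$ and strictly increasing on $[1,\infty)$, so any bound on the singular values of the form $\sigma_i(R) \in [\sigma_-, \sigma_+]$ immediately yields $\|R - R^{-*}\|_2 \le \max\{f(\sigma_-), f(\sigma_+)\}$.

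For the SRFT case ($X = \Omega$), I apply Theorem~\ref{srft} directly to $\Omega^*U$ (note $U$ from \eqref{svdA} has orthonormal columns) to get $\sigma_i(R) \in [0.4, 1.48]$ with failure probability $O(n^{-1})$; then $f(0.4) = 2.1$ and $f(1.48) < 1$, so $\|R - R^{-*}\|_2 \le 2.1$. For the Gaussian case with $s = 4n$, I observe that $G^* U$ has i.i.d.\ standard normal entries because $U$ has orthonormal columns, so Theorem~\ref{MP} applied to the $s \times n$ matrix $G^*U$ combined with a union bound gives, for $t = \Theta(\sqrt{n})$ chosen so that $2\exp(-t^2/2) \le \exp(-n/50)$, the bound
\[
\sigma_i(R) = \frac{\sigma_i(G^*U)}{\sqrt{s}} \in \left[\frac{\sqrt{s}-\sqrt{n}-t}{\sqrt{s}},\, \frac{\sqrt{s}+\sqrt{n}+t}{\sqrt{s}}\right] \subseteq [0.4, 1.6]
\]
with the required probability, and once again $f$ is maximized at the left endpoint with value $2.1$. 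Substituting into Theorem~\ref{relsin} finishes the proof in both cases.

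The only slightly delicate step is calibrating $t$ in the Gaussian argument so that (i) the union-bounded failure probability is at most $\exp(-n/50)$ and (ii) the resulting interval $[\sigma_-, \sigma_+]$ still satisfies $\sigma_- \ge 0.4$ (equivalently $f(\sigma_-) \le 2.1$) when $s = 4n$; this is a short explicit calculation and I do not expect any real obstacle beyond bookkeeping.
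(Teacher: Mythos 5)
Your proposal matches the paper's proof: both reduce the corollary to Theorem~\ref{relsin} together with the bound $\norm{R-R^{-*}}_2 \le \max_{\sigma}\lvert\sigma-\sigma^{-1}\rvert = 2.1$, using Theorem~\ref{srft} for the SRFT case and Theorem~\ref{MP} (applied to $G^*U$, which is standard Gaussian since $U$ is orthonormal) with $t=\Theta(\sqrt{n})$ for the Gaussian case. You fill in a few details the paper leaves implicit — that $R$ and $R^{-*}$ share singular vectors, that $\sigma_i(R)=\sigma_i(X^*U)$, and the explicit calibration of $t$ — but the argument is essentially identical.
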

\begin{proof}
The proof follows from the discussion on sketching matrices in Section \ref{sec:problem} and 
\begin{equation*}
    \norm{R-R^{-*}}_2 \leq \max_{\sigma\in [0.4,1.6]} \lvert \sigma-\sigma^{-1}\rvert = 2.1
\end{equation*} for $R$ as in Theorem \ref{relsin}.
\end{proof}

\subsubsection{A priori bound} \label{subsubsec: apriori}
In many cases, we are interested in a priori bounds for the accuracy of the trailing singular vectors using the sketch-and-solve method. We can obtain a priori bounds by substituting the singular values in place of $\alpha$ and $\delta$ in Corollary \ref{gaussianbound}. There are two a priori bounds, which are given below. If $\sigma_{n-k}>1.6 \sigma_{n-k+1}$ then
\begin{equation} \label{apriorib1}
    \norm{\sin\Theta(V_2,\tilde{V}_2)}_2 \leq \frac{2.1 \cdot \sigma_{n-k}\tilde{\sigma}_{n-k+1}}{\sigma_{n-k}^2-\tilde{\sigma}_{n-k+1}^2} \leq  \frac{3.36\cdot \sigma_{n-k}\sigma_{n-k+1}}{\sigma_{n-k}^2-2.56\cdot \sigma_{n-k+1}^2},
\end{equation}
and if $0.4 \sigma_{n-k} > \sigma_{n-k+1}$ then
\begin{equation} \label{apriorib2}
    \norm{\sin\Theta(V_2,\tilde{V}_2)}_2 \leq \frac{2.1 \tilde{\sigma}_{n-k}\sigma_{n-k+1}}{\tilde{\sigma}_{n-k}^2-\sigma_{n-k+1}^2} \leq  \frac{3.36\cdot \sigma_{n-k}\sigma_{n-k+1}}{0.16\cdot \sigma_{n-k}^2 - \sigma_{n-k+1}^2},
\end{equation}
since in the setting of Corollary \ref{gaussianbound}, we have
\begin{equation*}
    0.4\sigma_i \leq \tilde{\sigma}_i \leq 1.6\sigma_i
\end{equation*} for all $i$.

The upper bounds \eqref{apriorib1} and \eqref{apriorib2} are informative $(\ll 1)$ if $\sigma_{n-k} \gg \sigma_{n-k+1}$. In this case, the upper bounds are $\approx~\frac{\sigma_{n-k+1}}{\sigma_{n-k}}$, which are both much less than $1$. In particular if $\sigma_1 \geq \sigma_2 \geq \cdots \geq \sigma_{n-k} > \sigma_{n-k+1} = \cdots = \sigma_{n} = 0$ then the multiplicative perturbation by $X^*$ preserves the null space exactly as the upper bound becomes zero. In the presence of rounding errors, a backward stable solution would correspond to $\sigma_{n-k+1} = O(u)$ where $u$ is the unit roundoff. Assuming $\sigma_{n-k}$ is sufficiently larger than $u$, the last $k$ right singular vectors of the sketched matrix give an excellent approximation for the null space of the original matrix.

To illustrate the results, Figure \ref{Libound} shows the accuracy of the bound in Corollary~\ref{gaussianbound} for the case when $k=1$. We generated a random $1000 \times 100$ matrix with Haar distributed right singular vectors. The left singular vectors in the top left plot is also Haar distributed while the other $3$ plots were generated with the left singular vectors equal to $[I_{100},0]^T\in \mathbb{R}^{1000\times 100}$, which is a difficult (coherent) example for subspace embedding using SRFT~\cite{coherence, tropp11}.
 We set the singular values as $(\sigma_1,\sigma_2,...,\sigma_{n-1},\sigma_{n}) = (1,1,...,1,10^{-1},\sigma_n)$ where $\sigma_{n-1}/\sigma_n \in [10,10^{10}]$. We then sketch the matrix with a Gaussian matrix and an SRFT matrix. In Figure \ref{Libound}, we observe that except in the bottom left plot, the bounds in Corollary \ref{gaussianbound} give us the correct decay rate with a factor that is not too large. As seen in the bottom left plot, the SRFT sketch can fail, that is, the SRFT sketch fails to be a subspace embedding if we set the sketch size equal to $cn$ for a modest constant $c$, say $c=2,4$ for a difficult example (coherent) \cite{hmt, tropp11}. However, we can overcome this issue if we use the H-RHT sketch \cite{cartis21} or make the SRFT sketch size $\Theta(n\log n)$ as shown in the bottom right plot of Figure \ref{Libound}. This shows us that when $\sigma_{n-1}\gg \sigma_n$ we expect the sketched solution to give a very good approximation to the actual solution.
\begin{figure}[h] 
\centering
\hspace*{-1cm}
\includegraphics[scale = 0.5]{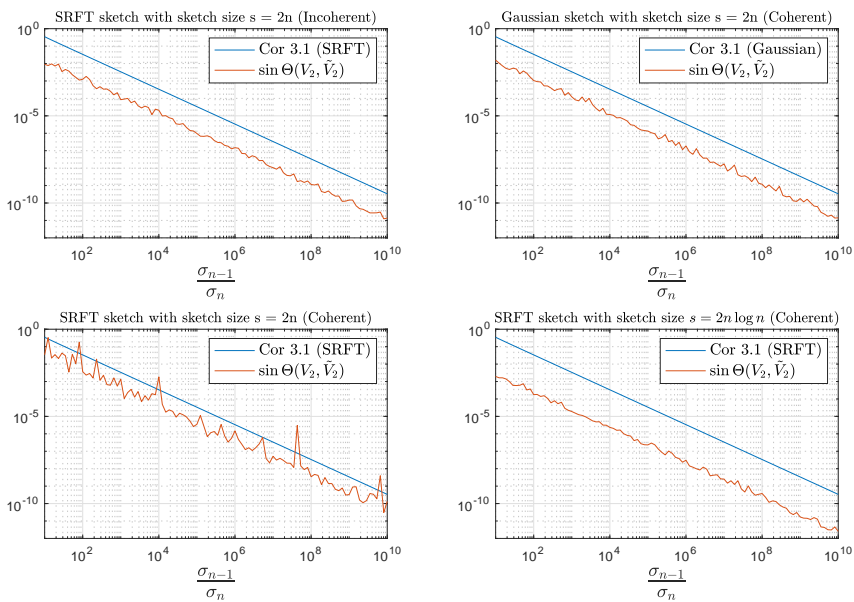}
\centering
\caption{Loglog plot of the bound in Corollary \ref{gaussianbound} against the actual sine values for  $m = 1000, n = 100$ and $k=1$. The matrix was generated with singular values diag$(1,1,...,1,10^{-1},\sigma_n)$ where $\sigma_{n-1}/\sigma_n \in [10,10^{10}]$. The bottom left plot shows a difficult example (coherent) for the SRFT sketch which can fail if the sketch size is not large enough. However, as seen on the bottom right plot it can be fixed by enlarging the sketch size to $\Theta(n\log n)$.
} 
\label{Libound}
\end{figure}

Up until now we have examined the canonical angles between two subspaces with the same dimension. We next extend Theorem \ref{relsin} to subspaces of different dimensions. The extension will be useful when the bound in Theorem \ref{relsin} is not useful, $O(1)$, and we want to search for a bigger subspace that can be shown to contain the subspace that we are looking for.

\subsection{Subspaces of different dimensions}
Let $A\in \mathbb{C}^{m\times n}$ and $\tilde{A} \in \mathbb{C}^{s\times n}$ be matrices with $m\geq s\geq n$ and suppose that they have SVDs of the form
\begin{equation} \label{svdA2}
    A = U\Sigma V^* = [U_1, U_2, U_3] 
    \begin{bmatrix}
\Sigma_1 & 0 & 0\\
0 & \Sigma_2 & 0 \\
0 & 0 & \Sigma_3
\end{bmatrix} [V_1, V_2, V_3]^*
\end{equation}
\begin{equation} \label{svdAtil2}
    \tilde{A} = \tilde{U}\tilde{\Sigma}\tilde{V}^* = [\tilde{U}_1, \tilde{U}_2,\tilde{U}_3] 
    \begin{bmatrix}
\tilde{\Sigma}_1 & 0 & 0 \\
0 & \tilde{\Sigma}_2 & 0 \\
0 & 0 & \tilde{\Sigma}_3
\end{bmatrix} [\tilde{V}_1, \tilde{V}_2,\tilde{V}_3]^*
\end{equation}
where $U\in \mathbb{C}^{m\times n}$, $\tilde{U}\in \mathbb{C}^{s\times n}$, $V,\tilde{V}\in \mathbb{C}^{n\times n}$ and the matrices with subscript $1$ have $(n-k)$ columns, those with subscript $2$ have $(k-\ell)$ columns and subscript $3$ have $\ell$ columns with $\ell <k<n$ and 
\begin{equation} \label{singvalA2}
    \Sigma_1 = \text{diag}(\sigma_1,...,\sigma_{n-k}), \hspace{0.0cm} \Sigma_2= \text{diag}(\sigma_{n-k+1},...,\sigma_{n-\ell}),
    \hspace{0.0cm} \Sigma_3= \text{diag}(\sigma_{n-\ell+1},...,\sigma_n),
\end{equation}
\begin{equation} \label{singvalAtil2}
    \tilde{\Sigma}_1 = \text{diag}(\tilde{\sigma}_1,...,\tilde{\sigma}_{n-k}), \hspace{0.0cm} \tilde{\Sigma}_2= \text{diag}(\tilde{\sigma}_{n-k+1},...,\tilde{\sigma}_{n-\ell}),
    \hspace{0.0cm} \tilde{\Sigma}_3= \text{diag}(\tilde{\sigma}_{n-\ell+1},...,\tilde{\sigma}_n)
\end{equation}
where the singular values are arranged in non-increasing order.

\begin{theorem} \label{relsin2}
Let $A\in \mathbb{C}^{m\times n}$ and $\tilde{A} = X^*A \in \mathbb{C}^{s\times n}$ ($m\geq s\geq n$) with thin SVDs as in Equations (\ref{svdA2}, \ref{svdAtil2}, \ref{singvalA2}, \ref{singvalAtil2}) where $X\in \mathbb{C}^{m \times s}$ is a matrix such that $X^*U$ has full rank. Let $X^*U=QR$ where $Q\in \mathbb{C}^{s\times n}$ and $R\in \mathbb{C}^{n\times n}$ be a thin QR factorization of $X^*U$ where $U$ is the orthonormal matrix from Equation \eqref{svdA2}. Suppose that there exists $\alpha>0$ and $\delta>0$ such that 
\begin{equation*}
    \min_{1\leq i\leq n-k}\sigma_i \geq \alpha+\delta \hspace{1cm}\text{  and  }\hspace{1cm} \max_{1\leq j\leq \ell} \tilde{\sigma}_{n-\ell+j}\leq \alpha.
\end{equation*}
Then for any unitarily invariant norm we have
\begin{equation*}
    \norm{\sin\Theta([V_2, V_3],\tilde{V}_3)} \leq \frac{\norm{R-R^{-*}}}{\chi\big(\alpha^2,(\alpha+\delta)^2\big)}.
\end{equation*}
Alternatively, if there exists $\alpha>0$ and $\delta>0$ such that 
\begin{equation*}
    \min_{1\leq i\leq n-k}\tilde{\sigma}_i \geq \alpha+\delta \hspace{1cm}\text{  and  }\hspace{1cm} \max_{1\leq j\leq \ell} \sigma_{n-\ell+j}\leq \alpha ,
\end{equation*}
then for any unitarily invariant norm we have
\begin{equation*}
    \norm{\sin\Theta(V_3,[\tilde{V}_2, \tilde{V}_3])} \leq \frac{\norm{R-R^{-*}}}{\chi\big(\alpha^2,(\alpha+\delta)^2\big)}
\end{equation*} where $V_2,V_3,\tilde{V}_2$ and $\tilde{V}_3$ are as in Equations (\ref{svdA2},\ref{svdAtil2}).
\end{theorem}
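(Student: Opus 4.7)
The plan is to imitate the proof of Theorem \ref{relsin} essentially verbatim, but with the $2\times 2$ block partitioning of the SVDs replaced by the $3\times 3$ partitioning prescribed by \eqref{svdA2}--\eqref{singvalAtil2}, and with a different off-diagonal block of the resulting Sylvester system playing the role that the $(2,1)$ block played before. First, I would set $B := \Sigma V^*$ and $\tilde{B} := R\Sigma V^*$ so that $A,B$ share right singular vectors and singular values, and similarly $\tilde{A},\tilde{B}$, since $X^*A = QR\Sigma V^*$. The identity
\begin{equation*}
    \tilde{B}^*\tilde{B} - B^*B = \tilde{B}^*(R - R^{-*})B,
\end{equation*}
which was the algebraic engine of Theorem \ref{relsin}, still holds and, after substituting the SVDs and multiplying by $\tilde{V}^*$ on the left and $V$ on the right, yields
\begin{equation*}
    \tilde{\Sigma}^2 \tilde{V}^*V - \tilde{V}^*V\Sigma^2 = \tilde{\Sigma}\,\tilde{U}^* Q M \,\Sigma, \qquad M := R - R^{-*}.
\end{equation*}
Now I would partition this into a $3\times 3$ block system consistent with \eqref{svdA2}--\eqref{singvalAtil2}, so that each block $(i,j)$ reads $\tilde{\Sigma}_i^2 \tilde{V}_i^* V_j - \tilde{V}_i^* V_j \Sigma_j^2 = \tilde{\Sigma}_i \tilde{U}_i^* Q M E_j \Sigma_j$, where $E_j$ is the block-column selector picking the columns of index set $j$.

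For the first claim, I would read off the $(3,1)$ block:
\begin{equation*}
    \tilde{\Sigma}_3^2 \tilde{V}_3^* V_1 - \tilde{V}_3^* V_1 \Sigma_1^2 = \tilde{\Sigma}_3 \tilde{U}_3^* Q M \begin{bmatrix} I_{n-k} \\ 0 \\ 0 \end{bmatrix} \Sigma_1.
\end{equation*}
Under the first hypothesis we have $\|\tilde{\Sigma}_3^2\|_2 \leq \alpha^2$ and $\|\Sigma_1^{-2}\|_2^{-1} \geq (\alpha+\delta)^2$, so Lemma \ref{relsinlemma} applies to this Sylvester equation (with $A \leftarrow \tilde{\Sigma}_3^2$, $B \leftarrow \Sigma_1^2$, $A^{1/2} = \tilde{\Sigma}_3$, $B^{1/2} = \Sigma_1$) and gives $\|\tilde{V}_3^* V_1\| \leq \|\tilde{U}_3^* Q M [I_{n-k};0;0]\|/\chi(\alpha^2,(\alpha+\delta)^2)$. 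The Mirsky-type inequality $\|ABC\| \leq \|A\|_2\|B\|\|C\|_2$ together with $\|\tilde{U}_3^* Q\|_2 \leq 1$ and $\|[I_{n-k};0;0]\|_2 = 1$ bounds the numerator by $\|R - R^{-*}\|$. Finally, because $V_1$ is an orthonormal basis for the orthogonal complement of $\mathrm{range}([V_2,V_3])$, the CS decomposition gives $\|\sin\Theta([V_2,V_3],\tilde{V}_3)\| = \|V_1^*\tilde{V}_3\| = \|\tilde{V}_3^* V_1\|$, which yields the first stated bound.

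For the second claim I would instead read off the $(1,3)$ block:
\begin{equation*}
    \tilde{\Sigma}_1^2 \tilde{V}_1^* V_3 - \tilde{V}_1^* V_3 \Sigma_3^2 = \tilde{\Sigma}_1 \tilde{U}_1^* Q M \begin{bmatrix} 0 \\ 0 \\ I_\ell \end{bmatrix} \Sigma_3,
\end{equation*}
and apply Lemma \ref{relsinlemma} in its second form, now with $\|\Sigma_3^2\|_2 \leq \alpha^2$ and $\|\tilde{\Sigma}_1^{-2}\|_2^{-1} \geq (\alpha+\delta)^2$. The same Mirsky argument and the CS identification $\|\sin\Theta(V_3,[\tilde{V}_2,\tilde{V}_3])\| = \|\tilde{V}_1^* V_3\|$ complete the proof. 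There is no real obstacle here: the only point requiring any care is identifying which off-diagonal block in the $3\times 3$ partition matches each hypothesis, so that the gap inequalities align correctly with $A$ and $B$ in Lemma \ref{relsinlemma}; once that is set up, the bound follows by the same three-step template (Sylvester equation, Lemma \ref{relsinlemma}, unitarily invariant norm inequality) as in Theorem \ref{relsin}.
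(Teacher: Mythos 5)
Your proposal is correct and follows the same approach as the paper: define $B=\Sigma V^*$ and $\tilde B=R\Sigma V^*$, derive $\tilde\Sigma^2\tilde V^*V-\tilde V^*V\Sigma^2=\tilde\Sigma\tilde U^*QM\Sigma$, partition into a $3\times3$ block Sylvester system, and invoke Lemma~\ref{relsinlemma} followed by the Mirsky-type norm inequality. The paper only writes out the first case (the $(3,1)$ block) and disposes of the second with ``follows similarly''; you explicitly work out the second case via the $(1,3)$ block with the second form of Lemma~\ref{relsinlemma}, which is exactly the intended ``similar'' argument, so there is no substantive difference.
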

\begin{proof}
We follow the proof of Theorem \ref{relsin}. We consider the case
\begin{equation*}
    \min_{1\leq i\leq n-k}\sigma_i \geq \alpha+\delta \hspace{1cm}\text{  and  }\hspace{1cm} \max_{1\leq j\leq \ell} \tilde{\sigma}_{n-\ell+j}\leq \alpha.
\end{equation*}
The other case follows similarly. As in Theorem \ref{relsin}, we define two matrices $B :=\Sigma V^*$ and $\tilde{B}:= R\Sigma V^*$. Notice that $A$ and $B$ have the same singular values and the right singular vectors. Also, note that since $X^*A = QR\Sigma V^*$, $\tilde{A}$ and $\tilde{B}$ have the same singular values and the right singular vectors, so it suffices to work with $B$ and $\tilde{B}$.
Since $X$ is a full rank matrix, $R$ is invertible. We first note that
\begin{equation*}
    \tilde{B}^*\tilde{B} - B^*B = \tilde{B}^*RB-\tilde{B}^*R^{-*}B = \tilde{B}^*(R-R^{-*})B.
\end{equation*} This is equivalent to
\begin{align*}
    \tilde{V}\tilde{\Sigma}^2\tilde{V}^*-V\Sigma^2V^* = \tilde{V}\tilde{\Sigma}(Q^*\tilde{U})^*(R-R^{-*})\Sigma V^*,
\end{align*} since $\tilde{B} = Q^*\tilde{A} = Q^*\tilde{U}\tilde{\Sigma}\tilde{V}^*$. Right-multiplying $V$ and left-multiplying $\tilde{V}^*$ gives
\begin{align*}
    \tilde{\Sigma}^2\tilde{V}^*V-\tilde{V}^*V\Sigma^2 = \tilde{\Sigma}(Q^*\tilde{U})^*(R-R^{-*})\Sigma.
\end{align*}
Now the above matrix equation can be represented as a $3\times 3$ block matrix as in Theorem \ref{relsin}.
Taking the $(3,1)$-entry of the block matrix we get
\begin{equation*}
    \tilde{\Sigma}_3^2\tilde{V}_3^*V_1-\tilde{V}_3^*V_1\Sigma_1^2 = \tilde{\Sigma}_3\tilde{U}_3^*Q(R-R^{-*})
    \begin{bmatrix}
    I_{n-k} \\ 0_{k\times (n-k)}
    \end{bmatrix}\Sigma_1.
\end{equation*}
This is again in the form of the Sylvester equation in Lemma \ref{relsinlemma}. Thus, using Lemma \ref{relsinlemma} we get for any unitarily invariant norm
\begin{equation*}
    \norm{\tilde{V}_3^*V_1}\leq \frac{\norm{\tilde{U}_3^*Q(R-R^{-*})
    \begin{bmatrix}
    I_{n-k} \\ 0_{k\times (n-k)}
    \end{bmatrix}}}{\chi\big(\alpha^2,(\alpha+\delta)^2\big)}.
\end{equation*}
Finally we get 
\begin{align*}
    \norm{\sin\Theta([V_2,V_3],\tilde{V}_3)} &=\norm{\tilde{V}_3^*V_1} \\ &\leq \frac{\norm{\tilde{U}_3^*Q(R-R^{-*})
    \begin{bmatrix}
    I_{n-k} \\ 0_{k\times (n-k)}
    \end{bmatrix}}}{\chi\big(\alpha^2,(\alpha+\delta)^2\big)}\\
    &\leq \frac{\norm{\tilde{U}_3^*Q}_2\norm{R-R^{-*}}\norm{\begin{bmatrix}
    I_{n-k} \\ 0_{k\times (n-k)}
    \end{bmatrix}}_2}{\chi\big(\alpha^2,(\alpha+\delta)^2\big)} \\
     &\leq \frac{\norm{R-R^{-*}}}{\chi\big(\alpha^2,(\alpha+\delta)^2\big)}
\end{align*} for any unitarily invariant norm.
\end{proof}
\begin{corollary} \label{gaussianbound2}
In the setting of Theorem \ref{relsin2}, we have
\begin{equation} \label{corsin2}
    \norm{\sin\Theta(V_3,[\tilde{V}_2,\tilde{V}_3])}_2 \leq \frac{2.1}{\chi(\alpha^2,(\alpha+\delta)^2)}
\end{equation} with failure probability at most $O(n^{-1})$ for an SRFT matrix $(X=\Omega)$ with the sketch size $s$ satisfying $4[\sqrt{n}+\sqrt{8\log(mn)}]^2\log n \leq s \leq m$. For the Gaussian case, that is, $X = G/\sqrt{s}$ where $G$ is a standard $m\times s$ Gaussian matrix, $\eqref{corsin2}$ is satisfied with failure probability at most $\exp(-n/50)$ with the sketch size $s = 4n$.
\end{corollary}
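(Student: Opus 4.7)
The plan is to apply Theorem~\ref{relsin2} in the spectral norm and then bound the factor $\norm{R-R^{-*}}_2$ using the sketch-specific probabilistic guarantees from Section~\ref{sec:problem}. The argument mirrors Corollary~\ref{gaussianbound} line for line: the only difference between Theorems~\ref{relsin} and~\ref{relsin2} lies in the canonical angle appearing on the left-hand side, whereas the factor $\norm{R-R^{-*}}_2$ and the relative gap $\chi(\alpha^2,(\alpha+\delta)^2)$ on the right depend only on the sketch $X$ and on the hypothesized singular value partition, respectively; the former quantity is identical in the two settings.

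The key observation is that if $R = U_R \Sigma_R V_R^*$ is an SVD of $R$, then $R^{-*} = U_R \Sigma_R^{-1} V_R^*$, so
\[
R - R^{-*} = U_R\bigl(\Sigma_R - \Sigma_R^{-1}\bigr)V_R^*,
\]
and hence $\norm{R - R^{-*}}_2 = \max_i \lvert \sigma_i(R) - \sigma_i(R)^{-1}\rvert$. Since $X^*U = QR$ is a thin QR factorization, $R$ shares its singular values with $X^*U$, so it suffices to confine those singular values to an interval on which $\sigma \mapsto \lvert\sigma - \sigma^{-1}\rvert$ is modest.

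For the SRFT case, Theorem~\ref{srft} places every singular value of $\Omega^*U$ in $[0.4, 1.48] \subset [0.4, 1.6]$ with failure probability $O(n^{-1})$, and a direct check gives $\max_{\sigma\in[0.4,1.6]}\lvert\sigma - \sigma^{-1}\rvert = 2.1$. For the Gaussian case with $X = G/\sqrt{s}$ and $s=4n$, rotational invariance of the Gaussian distribution implies that $G^*U$ has the same law as an $s\times n$ standard Gaussian, so Theorem~\ref{MP} applied with $t = \sqrt{n}/5$ yields $\sigma_i(X^*U) = \sigma_i(G^*U)/\sqrt{s} \in [0.4, 1.6]$ with failure probability bounded by $\exp(-n/50)$. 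Inserting $\norm{R-R^{-*}}_2 \leq 2.1$ into Theorem~\ref{relsin2} finishes both cases. I do not anticipate any real difficulty: the content of the corollary is simply that the tail bounds on $\norm{R - R^{-*}}_2$ already established for Corollary~\ref{gaussianbound} transfer without modification to the different-dimensional setting.
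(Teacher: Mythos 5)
Your proposal is correct and follows essentially the same route as the paper: the paper leaves Corollary~\ref{gaussianbound2} without a written proof because it is a verbatim re-run of the argument for Corollary~\ref{gaussianbound}, namely apply Theorem~\ref{relsin2} in the spectral norm and bound $\norm{R-R^{-*}}_2$ by $\max_{\sigma\in[0.4,1.6]}\lvert\sigma-\sigma^{-1}\rvert=2.1$ using the singular-value containment guarantees for $X^*U$ from Section~\ref{sec:problem}, which is exactly what you do, including the correct identification $\norm{R-R^{-*}}_2 = \max_i\lvert\sigma_i(R)-\sigma_i(R)^{-1}\rvert$ and the Gaussian calculation via Theorem~\ref{MP} with $t=\sqrt{n}/5$ and $s=4n$.
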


\subsubsection{A priori bound}
Similarly as before, we may be interested in a priori bounds for the accuracy of the trailing singular vectors using the sketch-and-solve method when the subspaces sizes are different. We can obtain a priori bounds by substituting the singular values in place of $\alpha$ and $\delta$ in Corollary \ref{gaussianbound2}. There are two a priori bounds, which are given below. If $0.4\sigma_{n-k}> \sigma_{n-\ell+1}$ then
\begin{equation} \label{apriorib3}
    \norm{\sin\Theta(V_3,[\tilde{V}_2,\tilde{V}_3])}_2 \leq \frac{2.1 \cdot \tilde{\sigma}_{n-k}\sigma_{n-\ell+1}}{\tilde{\sigma}_{n-k}^2-\sigma_{n-\ell+1}^2} \leq \frac{3.36\cdot \sigma_{n-k}\sigma_{n-\ell+1}}{0.16\cdot \sigma_{n-k}^2-\sigma_{n-\ell+1}^2},
\end{equation}
and if $ \sigma_{n-k} > 1.6\sigma_{n-\ell+1}$ then
\begin{equation} \label{apriorib4}
    \norm{\sin\Theta([V_2,V_3],\tilde{V}_3)}_2 \leq
    \frac{2.1 \cdot \sigma_{n-k}\tilde{\sigma}_{n-\ell+1}}{\sigma_{n-k}^2-\tilde{\sigma}_{n-\ell+1}^2} \leq \frac{3.36\cdot \sigma_{n-k}\sigma_{n-\ell+1}}{\sigma_{n-k}^2-2.56\cdot \sigma_{n-\ell+1}^2}, 
\end{equation}
since in the setting of Corollary \ref{gaussianbound2}, we have
\begin{equation*}
    0.4\sigma_i \leq \tilde{\sigma}_i \leq 1.6\sigma_i
\end{equation*} for all $i$.

The upper bounds \eqref{apriorib3} and \eqref{apriorib4} are informative $(\ll 1)$ if $\sigma_{n-k} \gg \sigma_{n-\ell+1}$. In this case, the upper bounds are $\approx~\frac{\sigma_{n-\ell+1}}{\sigma_{n-k}}$, which are both much less than $1$.

Now we illustrate the results in Figure \ref{Libound2}. Figure \ref{Libound2} shows the accuracy of the bound in Corollary \ref{gaussianbound2} for the SRFT sketch in the case when $\ell = 1$ and $k = 2,...,n-1$. We generated a random $1000 
\times 100$ matrix by sampling the left and the right singular vectors as before for the coherent example, but the singular values now decay geometrically from $1$ to $10^{-10}$ for the left plot and the right plot has singular values equal to $1$ for the first $80$ singular values and $10^{-10}$ for the last $20$ singular values. In both of these cases, we have $\sigma_{n}/\sigma_{n-1} \approx 1$, so the previous bound in Corollary~\ref{gaussianbound} is useless. However, the bound in Corollary~\ref{gaussianbound2} becomes meaningful as we increase the subspace dimension $k$. 
 In Figure \ref{Libound2}, we see that the bounds in Corollary \ref{gaussianbound2} give us the correct decay rate with a modest factor for the left plot. For the right plot, we see that after the subspace becomes large enough ($>20$) to contain the right singular subspace corresponding the the singular value $10^{-10}$, we get a good subspace which approximately contains the last right singular vector of the original matrix.  This illustrates us that even when $\sigma_{n}/\sigma_{n-1} \approx 1$, as long as $\sigma_{n-k}\gg \sigma_n$ we can find a larger subspace of dimension $k$ that is expected to contain the right singular vector corresponding to the smallest singular value. 
\begin{figure}[h] 
\centering
\hspace*{-1.2cm}
\includegraphics[scale = 0.48]{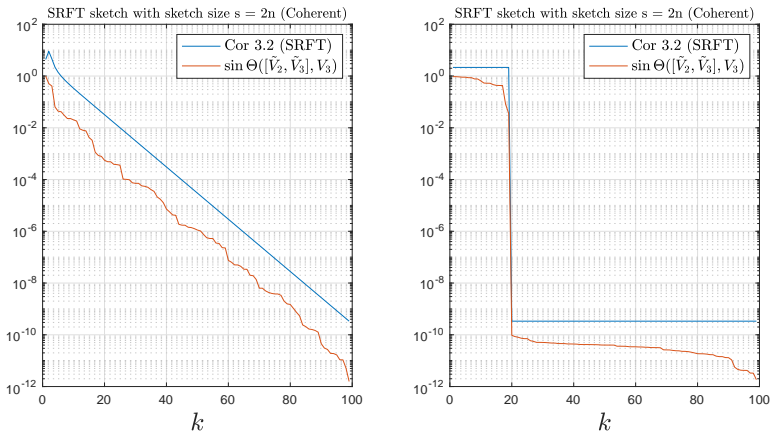}
\centering
\caption{Semilogy plot of the bound in Corollary \ref{gaussianbound2} against the actual sine values for $m = 1000, n = 100, \ell=1$ and subspace size $k = 2,...,(n-1)$. The matrix was generated with singular values that decay geometrically from $1$ to $10^{-10}$ for the left plot and the right plot has singular values equal to $1$ for the first $80$ singular values and the last $20$ singular values are equal to $10^{-10}$. The bound in Corollary \ref{gaussianbound2} gives us a good indication as to how much the computed subspace contains the last right singular vector.
}
\label{Libound2}
\end{figure}

 We now turn to the total least squares problem where we apply the ideas from Sections \ref{sec:problem} and \ref{sec:RPT}.
\section{Total Least Squares} \label{sec:tls}
Total least squares (TLS) \cite{tls80} is a type of least squares problem where errors in both independent and dependent variables are allowed. TLS is also known as errors-in-variables models in statistics. This is different from the standard least-squares problem where errors only occur in dependent variables. In a standard least-squares problem, we are interested in solving the following optimization problem
\begin{equation*}
    \min_{x\in \mathbb{R}^n} \norm{Ax-b}_2
\end{equation*} where $A\in \mathbb{R}^{m\times n}$ and $b\in \mathbb{R}^m$ with $m\geq n$. The errors only occur in the dependent variables, $b$, so the problem can be restated as
\begin{equation*}
    \min_{b+r \in \text{range}(A)} \norm{r}_2.
\end{equation*}
Now if we also allow errors in the independent variables, represented by $A$, we get the optimization problem formulation for the TLS problem, which is
\begin{equation*}
    \min_{b+r \in \text{range}(A+E)} \norm{[E\vert r]}_\text{F}
\end{equation*} where $[\cdot \vert \cdot]$ represents the augmented matrix of size $m\times (n+1)$. 

Now, if we allow multiple right-hand sides, say $k$, we get
\begin{equation} \label{tls}
    \min_{B+R \in \text{range}(A+E)} \norm{[E\vert R]}_\text{F}
\end{equation} where $A,E \in \mathbb{R}^{m\times n}$ and $B,R \in \mathbb{R}^{m\times k}$. We will also impose $m\geq n+k$ and $n\geq k$. If $[E_0\vert R_0]$ solves the optimization problem \eqref{tls}, then any $X\in \mathbb{R}^{n \times k}$ satisfying $(A+E_0)X = B_0+R_0$ is called the TLS solution.

In 1980, Golub and Van Loan \cite{tls80} gave a solution to the TLS problem. The algorithm solves the null space problem
\begin{equation*}
    V_k = \argmin_{V^*V = I_k} \norm{[A\vert B]V}_\text{F}
\end{equation*}  
with $V_k = \begin{bmatrix} V_{k,1} \\ V_{k,2} \end{bmatrix}$ where $V_{k,1}\in \mathbb{R}^{n\times k}$ and $V_{k,2}\in \mathbb{R}^{k\times k}$ and sets the solution to $X= -V_{k,1}V_{k,2}^{-1} \in \mathbb{R}^{n\times k}$. Note that since we are inverting $V_{k,2}$, the solution may not exist. There are known conditions for existence and uniqueness, for example when $\sigma_n(A) > \sigma_{n+1}([A\vert B])$. For more information see \cite{tls80,tls91}. The cost of computing the TLS solution is $O(m(n+k)^2)$ flops for computing the SVD of $[A\vert B] \in\mathbb{R}^{m\times (n+k)}$.

For the sketched version, we sketch the augmented matrix $[A\vert B]$ in $O(m(n+k)\log(n+k))$ flops and solve the TLS problem for a smaller-sized matrix using $O((n+k)^3)$ flops. Overall, the sketched version costs $O(m(n+k)\log(n+k)+(n+k)^3)$ flops, which becomes effective when $m \gg (n+k)$. 

For the accuracy of the sketched TLS solution, since the condition number of the TLS problem depends on how large the gap between $\sigma_n(A)$ and $\sigma_{n+1}([A\vert B])$ is \cite{tls80}, we expect the relative error $\frac{\norm{X-\tilde{X}}_2}{\norm{X}_2}$ and $\norm{\sin\Theta(X,\tilde{X})}_2$ to be proportional to the relative gap $\frac{\sigma_{n+1}([A\vert B])}{\sigma_n(A)}$ (Section \ref{sec:RPT}); here $X \in \mathbb{R}^{n\times k}$ is the TLS solution and $\tilde{X} \in \mathbb{R}^{n\times k}$ is the sketched TLS solution.
In addition, we also expect $\norm{\sin\Theta(V_k,\tilde{V}_k)}_2$ to be proportional to the relative gap $\frac{\sigma_{n+1}([A\vert B])}{\sigma_n(A)}$, 
where $V_k \in \mathbb{R}^{(n+k)\times k}$ is the trailing right singular vectors of the original TLS problem and $\tilde{V}_k \in \mathbb{R}^{(n+k)\times k}$ is the trailing right singular vectors of the sketched TLS problem. We demonstrate the relationship between the relative error $\frac{\norm{X-\tilde{X}}_2}{\norm{X}_2}$, $\norm{\sin\Theta(X,\tilde{X})}_2$ and $\norm{\sin\Theta(V_k,\tilde{V}_k)}_2$ with an experiment by varying the relative gap $\frac{\sigma_{n+1}([A\vert B])}{\sigma_n(A)}$.

Figure \ref{tlsplot} was generated using $A\in \mathbb{R}^{m\times n}$ and $B\in \mathbb{R}^{m\times k}$ with $m = 10^{5}$, $n = 10^{3}$ and $k = 10$ where $A$ is as in the previous experiments with the singular values that decay geometrically from $1$ to $10^{-3}$ and $B$ is a sum of a matrix in the span of $A$ with the same norm as $A$ and a Gaussian noise matrix of varying noise level between $10^{-3}$ and $10^{-12}$. In this experiment, since $A$ and $B$ are real matrices, the FFT matrix in the SRFT sketch was replaced with the DCT matrix. The sketch size was $s = 2020$, which is $2$ times the sum of the number of columns of $A$ and $B$.

\begin{figure}[h]
\hspace*{-1cm}
\includegraphics[scale = 0.45]{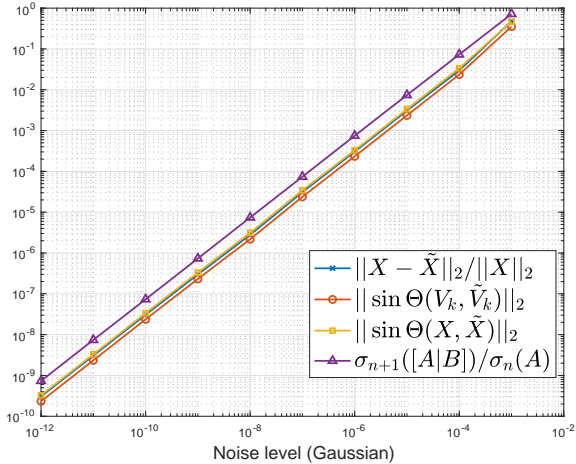}
\centering
\caption{A plot demonstrating the similarity between three different error metrics: relative error $\frac{\norm{X-\tilde{X}}_2}{\norm{X}_2}$ and the angles $\norm{\sin\Theta(X,\tilde{X})}_2$ and $\norm{\sin\Theta(V_k,\tilde{V}_k)}_2$. As we increase the relative gap $\frac{\sigma_{n+1}([A\vert B])}{\sigma_n(A)}$, we observe that all three error metrics behave similarly and have similar magnitudes.
}
\label{tlsplot}
\end{figure}

We observe in Figure \ref{tlsplot} that as we vary the noise level and hence the relative gap $\frac{\sigma_{n+1}([A\vert B])}{\sigma_n(A)}$, the error metrics $\frac{\norm{X-\tilde{X}}_2}{\norm{X}_2}$, $\norm{\sin\Theta(X,\tilde{X})}_2$ and $\norm{\sin\Theta(V_k,\tilde{V}_k)}_2$ all behave similarly. All three error metrics are proportional to the relative gap and they have similar magnitudes. In view of the theory developed in Section \ref{sec:RPT}, we use $\norm{\sin\Theta(V_k,\tilde{V}_k)}_2$ as an error metric for analyzing the accuracy of the sketched TLS solution. From Section \ref{subsubsec: apriori}, we have an a priori bound 
\begin{equation*}
    \norm{\sin\Theta(V_k,\tilde{V}_k)}_2 \lesssim O\left(\frac{\sigma_{n+1}([A\vert B])}{\sigma_n(A)}\right).
\end{equation*}
Now we demonstrate the speedup obtained using sketching (Algorithm \ref{nsalg}) with an experiment\footnote{All experiments were performed in MATLAB version 2021a on a workstation with Intel® Xeon® Silver 4314 CPU @ 2.40GHz ($16$ cores) and 512GB memory. The code for reproducing the experiments in Sections \ref{sec:tls} and \ref{sec:app} are available at \url{https://github.com/tpark4466/FastApproxNullSpace}.\label{fn:machine}}. 

\begin{table}[h]
\begin{center}
\begin{minipage}{\textwidth}
\caption{Total least squares problem with $A\in \mathbb{R}^{m\times 1000}$ and $B\in \mathbb{R}^{m\times 10}$ $(n = 1000, k = 10)$. $\tilde{X}\in \mathbb{R}^{1000\times 10}$ is the sketched TLS solution with the TLS error $\lVert[\tilde{E}\vert \tilde{R}]\rVert_{\text{F}}$ and $X\in \mathbb{R}^{1000\times 10}$ is the original TLS solution with the TLS error $\norm{[E\vert R]}_{\text{F}}$. The associated relative residual is $\frac{\lVert[\tilde{E}\vert \tilde{R}]\rVert_{\text{F}}}{\norm{[E\vert R]}_{\text{F}}}$ and the associated relative error is $\frac{\norm{X-\tilde{X}}_2}{\norm{X}_2}$. $V_k$ and $\tilde{V}_k$ are the $k$ trailing right singular vectors of the original TLS problem and the sketched TLS problem respectively.
}
\label{tls_table}
\begin{tabular*}{\textwidth}
{@{\extracolsep{\fill}}cccccc@{\extracolsep{\fill}}}
\toprule
$m$ & Speedup & $\norm{[E\vert R]}_{\text{F}}$ & $ \frac{\lVert[\tilde{E}\vert \tilde{R}]\rVert_{\text{F}}}{\norm{[E\vert R]}_{\text{F}}}$ & $ \frac{\norm{X-\tilde{X}}_2}{\norm{X}_2}$ & $\norm{\sin\Theta(V_k,\tilde{V}_k)}_2$ \\
\midrule
$2^{14}$ & $3.71 \times$ & $2.16 \cdot 10^{-8}$ & $1.39$ & $2.65 \cdot 10^{-6}$ & $2.21 \cdot 10^{-6}$ \\
$2^{15}$ & $6.07 \times$ & $2.21 \cdot 10^{-8}$ & $1.40$ & $2.98 \cdot 10^{-6}$ & $2.46 \cdot 10^{-6}$ \\
$2^{16}$ & $8.55 \times$ & $2.22 \cdot 10^{-8}$ & $1.40$ & $3.00 \cdot 10^{-6}$ & $2.32 \cdot 10^{-6}$ \\
$2^{17}$ & $14.12 \times$ & $2.22 \cdot 10^{-8}$ & $1.40$ & $2.89 \cdot 10^{-6}$ & $2.39 \cdot 10^{-6}$ \\
$2^{18}$ & $16.10 \times$ & $2.22 \cdot 10^{-8}$ & $1.41$ & $2.91 \cdot 10^{-6}$ & $2.24 \cdot 10^{-6}$ \\
\botrule
\end{tabular*}
\end{minipage}
\end{center}
\end{table}

Table \ref{tls_table} was generated using the same setup as in Figure \ref{tlsplot} with $A\in \mathbb{R}^{m\times 1000}$ and $B\in \mathbb{R}^{m\times 10}$, but with varying values of $m = 2^{14},2^{15},$ $2^{16}, 2^{17}, 2^{18}$ and a fixed Gaussian noise level of $10^{-3}$ for the matrix $B$.
In Table \ref{tls_table}, as we increase the value of $m$, we observe up to $16\times$ speedup, demonstrating the benefit of sketching. We also notice that the relative error and the sine of the angle between the trailing right singular vectors of the original TLS problem and the sketched TLS problem are small\footnote{The accuracy of the sketched solution can be improved by enlarging the sketch size at the cost of increasing the overall complexity.} and similar in magnitude, which is consistent with Figure \ref{tlsplot}. Lastly, we see that the sketched solution residual error is only a modest constant larger than the original error; this is expected since sketching gives a near-optimal solution (Section \ref{sec:problem}). Therefore, the sketched solution is a good approximate solution for the TLS problem.

\section{Updating and Downdating}
\label{sec:UD}
We now look at how row/column updates or downdates of the original matrix influence the sketch. Let $A \in \mathbb{C}^{m\times n}$ with $m\geq n$. We have been sketching from the left to get $SA\in \mathbb{C}^{s\times n}$ where $S\in \mathbb{C}^{s\times m}$ is a sketching matrix with sketch size $s$. Sometimes, in problems such as the AAA algorithm \cite{aaa} which we discuss in Section \ref{sec:app}, we want to update $A$, either by adding or removing a column and/or adding or removing a row. The null space of the updated matrix is then sought.

In this section, we devise a strategy in a similar spirit to the ones used for data streaming \cite{cw09,troppstream} to efficiently update the sketch $SA$ rather than sketching $A$ from scratch whenever a column or a row update/downdate to $A$ is made, so that the solution for the null space problem can be updated efficiently.
These strategies which will be shown below are possible because a particular realization of a sketching matrix can be reused for updates/downdates as long as the update/downdate does not depend on that realization of the sketching matrix. More specifically, if the update/downdate of the original matrix does not depend on the sketching matrix then the sketching matrix will only fail to be a subspace embedding for the updated/downdated matrix with probability that is at most a sum of exponentially small terms using the union bound. This happens because a single realization of the sketching matrix fails to be a subspace embedding for any matrix with exponentially small probability. (See Section \ref{sec:problem})

First, adding or removing a column is straightforward. If we add or remove a column from the original matrix then we can do the same for the sketch. 

By contrast, adding or removing a row is not so simple. For simplicity, we focus on row updates and downdates using the Gaussian sketching matrix, that is, $S = G/\sqrt{s} \in \mathbb{R}^{s\times m}$ where $G$ has entries that are independent standard normal random variables. We first observe that regardless of how many row updates and downdates are being done to the original matrix, the sketch $SA$ will always be an $s\times n$ matrix.\footnote{One caution is that if too many row downdates are done to $A$ so that $m\approx s$ then sketching becomes pointless.} 

\subsection{Row updating} Let us consider a row update first. Without loss of generality, suppose that a row $a_{m+1} \in \mathbb{C}^{1\times n}$ is added to the bottom of $A$ and let $A_{m+1} := \begin{bmatrix} A \\ a_{m+1}\end{bmatrix} \in \mathbb{C}^{(m+1)\times n}$. If we did not have the sketch of $A$ and if we had to sketch from scratch then we need to draw a Gaussian sketching matrix $\tilde{G}/\sqrt{s}\in \mathbb{R}^{s\times (m+1)}$ and left-multiply it to $A_{m+1}$. Now we find an equivalent process by reusing the sketch $SA\in\mathbb{R}^{s\times m}$. Since a row is appended to the end of $A$, we add a column to the end of $S$ giving us $S_{m+1} = [ G \vert g_{m+1}]/\sqrt{s}$ where $g_{m+1}\in \mathbb{R}^{s}$ is a Gaussian vector independent of $G$ and $A$. Notice that $[G\vert g_{m+1}]$ and $\tilde{G}$ are equal in distribution. Therefore, the updated sketch for $A_{m+1}$ becomes
\begin{equation*}
    S_{m+1} A_{m+1} = \frac{1}{\sqrt{s}}[G\vert g_{m+1}] \begin{bmatrix} A \\ a_{m+1}\end{bmatrix} = SA+\frac{1}{\sqrt{s}}g_{m+1}a_{m+1}
\end{equation*} with the updated sketching matrix $S_{m+1} = [G\vert g_{m+1}]/\sqrt{s}$. Algorithm \ref{rowupdate} shows this.

\begin{algorithm}
\caption{A row update (adding the $(m+1)$th row to $A\in \mathbb{C}^{m\times n}$)}
\label{rowupdate}
\begin{algorithmic}[1]
\Require{$S\in \mathbb{R}^{s\times m}$ Sketching matrix (sketch size $s$ ($m\gg s> n$)), $a_{m+1}~\in~\mathbb{C}^{1\times n}$ the row being added, $SA\in \mathbb{C}^{s\times n}$ the sketch of $A$}
\Ensure{$S_{m+1}\in \mathbb{R}^{s\times (m+1)}$ updated sketching matrix, $S_{m+1}A_{m+1}\in \mathbb{C}^{s\times n}$ updated sketch}
\State Sample $g_{m+1}\in \mathbb{R}^s$ with i.i.d. $N(0,1)$ random variable, also independent of $S$
\State Set $S_r= [S\vert g_{m+1}/\sqrt{s}] $
\State Set $S_rA_r = SA + \frac{g_{m+1}a_{m+1}}{\sqrt{s}}$
\end{algorithmic}
\end{algorithm}

\subsection{Row downdating} Row downdating is similar to a row update. Let $A_j\in \mathbb{C}^{(m-1)\times n}$ be the matrix with the $j$th row removed from $A$. Using a similar idea as a row update, we let $S_j\in \mathbb{R}^{s\times (m-1)}$ be the matrix with the $j$th column removed from $S$. Then the updated sketch for $A_{j}$ becomes
\begin{equation*}
S_j A_j = \left[S(:,1:j-1), S(:,j+1:m)\right] \begin{bmatrix} A(1:j-1,:) \\ A(j+1:m,:)\end{bmatrix} = SA-S(:,j)A(j,:)
\end{equation*} using MATLAB notation. Thus, the updated sketching matrix becomes $S_j$ and the updated sketch becomes $SA-S(:,j)A(j,:)$. Algorithm \ref{rowdowndate} shows this.

\begin{algorithm}
\caption{A row downdate (removing the $j$th row from $A\in \mathbb{C}^{m\times n}$)}
\label{rowdowndate}
\begin{algorithmic}[1]
\Require{$S\in \mathbb{R}^{s\times m}$ Sketching matrix (sketch size $s$ ($m\gg s> n$)), $SA~\in~\mathbb{C}^{s\times n}$ the sketch of $A$}
\Ensure{$S_j\in \mathbb{R}^{s\times (m-1)}$ downdated sketching matrix, $S_jA_j\in \mathbb{C}^{s\times n}$ downdated sketch}
\State Set $S_j= [S(:,1:j-1),S(:,j+1:m)]$, $S$ with $j$th column deleted
\State Set $S_jA_j = SA - S(:,j)A(j,:)$
\end{algorithmic}
\end{algorithm}

\subsection{Non-Gaussian sketch}
We have considered the Gaussian sketching matrix for row updates and downdates. For other sketching matrices such as the SRFT, the analysis is more difficult. However, in practice many sketching matrices behave similarly to the Gaussian sketching matrix and often Gaussian analysis reflects the performance in practice \cite{mt20}. For column updates and downdates, the strategy is the same for all sketching matrices, but not necessarily for rows. The strategy for row updates/downdates can be the same for certain classes of sketching matrices, for example, all the sketching matrices that have i.i.d. columns. The strategy we suggest for other sketching matrices for row updates and downdates is the following.

For a row update, we append a standard Gaussian column vector to the sketching matrix at the position where the new row gets appended to $A$. The updated sketch will then be $SA+g a_{m+1}/\sqrt{s}$ where $SA\in \mathbb{C}^{s\times n}$ is the previous sketch, $g\in \mathbb{C}^{s}$ is a standard Gaussian column vector independent with $S$ and $a_{m+1}\in \mathbb{C}^{1\times n}$ is the row appended to $A$. For a row downdate, we would need to remove the column from the sketching matrix corresponding to the row that will be removed from $A$. Removing this column is essentially the same as zeroing out its corresponding row in $A$ and keeping the original sketch. Therefore we propose the following. We take the standard basis vector corresponding to the index of the row that will be removed from $A$, say $\mathbf{e}_j$, sketch $\mathbf{e}_j$ using the original sketching matrix $S$ giving $S\mathbf{e}_j=:\tilde{\mathbf{e}}_j\in \mathbb{C}^s$ and subtract $\tilde{\mathbf{e}}_j A(j,:) \in \mathbb{C}^{s\times n}$ from the original sketch, where $A(j,:)$ is the row that is to be removed from $A$. This process removes the contribution from the removed row in the original sketch.

\subsection{Complexity of updating the sketch}
We discuss the complexity of reusing the sketch. We assume that the sketching matrix is an SRFT matrix with the sketch size $s = 2n$ and we are sketching the matrix $A\in \mathbb{C}^{m\times n}$ ($m\gg n$). For a column downdate, it is essentially free because we only need to remove a column from the sketch. For a column update, we need to sketch a column which costs $O(m\log m)$ flops. For a row update we need to do a column-row multiplication followed by an addition of two $s \times n$ matrices which cost an overall $O(sn)$ flops. For a row downdate, we need to sketch a standard basis vector which costs $O(m\log m)$ flops and perform a column-row multiplication followed by a subtraction of two $s \times n$ matrices which cost $O(sn)$ flops. Overall, a row downdate costs $O(m\log m +sn)$ flops. This is better than resketching, which costs $O(mn\log n)$ flops.  Therefore, whenever an update or a downdate is made to the original matrix, updating the sketch is at least about $O(n)$ times better than resketching the updated/downdated matrix from scratch. 

We now use the ideas in this section to speed up the AAA algorithm for rational approximation by reusing the sketch in the sketch-and-solve method.

\section{AAA algorithm for rational approximation}
\label{sec:app}
The goal of rational approximation is: given a (possibly complicated) function $\mathtt{f}:\mathbb{C}\rightarrow\mathbb{C}$, find a rational function $\mathtt{r}:\mathbb{C}\rightarrow\mathbb{C}$ that approximates $\mathtt{f}\approx \mathtt{r}$, in a domain $\Omega\subseteq \mathbb{C}$. 
The AAA algorithm \cite{aaa} is a powerful algorithm for this task, 
requiring only a set of distinct sample points, $z_1,z_2,...,z_m\in\Omega$, and their function values, $f_1,f_2,...,f_m$, that is, $f_i=\mathtt{f}(z_i)$. 
The flexibility and empirical near-optimality of AAA have resulted in its use in a large number of applications, including nonlinear eigenvalue problems~\cite{lietaert2022automatic}, conformal maps~\cite{gopal2019representation}, model order reduction~\cite{gosea2020aaa}, and signal processing~\cite{derevianko2023esprit}. 
Rational approximation 
 can significantly outperform the more standard polynomial approximation when e.g. $\Omega$ is unbounded or when $\mathtt{f}$ has singularities~\cite{trefethenatap}.

\subsection{A brief summary of the AAA algorithm}
Let us outline the AAA algorithm, emphasizing how it results in a null space problem. 
AAA has two key ingredients. The first is to 
use the barycentric representation of rational functions, instead of the standard quotient of polynomials:
\begin{equation*}
    \mathtt{r}(z) = \mathtt{n}(z)/\mathtt{d}(z) = \sum_{j=1}^n \frac{f_{i_j} w_j}{z-z_{i_j}}\Bigg/\sum_{j=1}^n \frac{w_j}{z-z_{i_j}}
\end{equation*} where $w_1,w_2,...,w_n$ are weights and $i_1,i_2,...,i_n \in \{1,2,...,m\}$ are distinct indices, where usually $m\gg n$. 
The second key ingredient is the greedy selection of the so-called \emph{support points} $z_{i_j}$, which are chosen as the points where the current error $\abs{ \mathtt{f}(z_i)-\mathtt{r}(z_i)}$ is maximized.
The algorithm 
then finds the minimizer of the linerized least-squares error $\|\mathtt{fd}-\mathtt{n}\|$, where the norm is the $\ell_2$ norm on the sample points. 
This is equivalent to 
forming a matrix called the Loewner matrix \cite{aaa}, $A^{(k)}$, whose $(i,j)$ entry is $\frac{f(z_i)-f(z_j)}{z_i-z_j}$, and finding $ \norm{Aw}_2=\|\mathtt{fd}-\mathtt{n}\|$ over unit-norm vectors $w$, i.e., a null space problem with $k=1$. 
As the algorithm iterates, a column is added (the degree of $\mathtt{r}$ is increased)
while a row is removed (a support point is added, hence removed from the least-squares equation) from the Loewner matrix from the previous iteration. Therefore at the $k$th iteration, $A^{(k)}\in \mathbb{C}^{(m-k)\times k}$ and the iteration is terminated once the tolerance is met. The precise details are covered in the original paper \cite{aaa}. 

To summarize, the main computational task in AAA is that at each iteration, say $k$, we solve for the weights $w\in \mathbb{C}^{k}$ that solve the following null space problem
\begin{equation*}
    \min_{\norm{w}_2=1}\norm{A^{(k)}w}_2.
\end{equation*} 
The algorithm computes the SVD of an $(m-k)\times k$ matrix at iteration $k$, giving us the overall complexity of $O(mn^3)$ for the AAA algorithm where $n$ is the degree of the rational approximant $\mathtt{r}$.

\subsection{Sketching and updating+downdating for speeding up AAA}
Given that the null space problem is the main computational task in AAA, it is natural to apply the algorithms in this paper to speed it up. The first straightforward idea is as follows:
At the $k$th iteration, when we solve for the right singular vector corresponding to the smallest singular value of $A^{(k)}$ we can sketch the matrix $A^{(k)}$ and then take the SVD of a smaller-sized matrix instead. This will reduce the complexity to $O(mn^2\log n+n^4)$ when $m\gg n$. However, in our implementation of SRFT using the standard FFT we achieve a theoretical flop count of $O(mk\log m+k^3)$ at iteration $k$. This limits the speedup we can obtain as $n$ is usually not too large ($\lesssim 200$) in most applications \cite{aaa}.

Fortunately, we can improve the speed further by noting that at each iteration of the AAA algorithm a column is added while a row is deleted from the Loewner matrix. 
Therefore we can use the strategies discussed in Section \ref{sec:UD}.\footnote{At each iteration, a column update and a row downdate to the Loewner matrix is the result of choosing a support point. Since we begin with the original matrix and the sketching matrix being independent, the failure probability for the column update and the row downdate concerning any independently chosen support point is exponentially small (Section \ref{sec:UD}) Therefore, regardless of how the sketch influences the choice of the support point, the sketch would succeed, in the worst case, with all but a sum of exponentially small probabilities.}. For deleting a row, we can use Algorithm \ref{rowdowndate} and for adding a column, we can sketch the new column and append it to the sketch. The overall sketch then becomes
\begin{equation*}
    \tilde{A}^{(k)}= \left[\tilde{A}^{(k-1)}-\tilde{\mathbf{e}}_j A^{(k-1)}(j,:), \tilde{A}_k\right] \in \mathbb{C}^{s\times k}
\end{equation*} at iteration $k$ where $\tilde{A}^{(k-1)}$ is the sketch of $A^{(k-1)}$, $\tilde{\mathbf{e}}_j$ is the sketch of the $j$th canonical vector, $A^{(k-1)}(j,:)$ is the deleted row and $\tilde{A}_k$ is the sketch of the added column. After the sketch is made, we take the SVD of the sketch $\tilde{A}^{(k)}$ and extract the last trailing singular vector from the SVD. The overall complexity is $O(mn\log m +n^4)$ using the SRFT sketch with the sketch size $s=2n$. Thus, when $m\gg n$ and $m$ is at most exponentially larger than $n$, we get a lower complexity than both the original AAA algorithm with $O(mn^3)$ flops, and the version where we resketch the entire Loewner matrix $A^{(k)}$ at each iteration, requiring $O(mn^2\log n +n^4)$ flops.

\subsubsection{Other approaches for speeding up AAA} In \cite{hochman2017fastaaa}, Hochman designs an algorithm to speed up the AAA algorithm based on Cholesky update/downdate of the Gram matrix of $A^{(k)}$. In the AAA algorithm, the Loewner matrix $A^{(k)}$ can become extremely ill-conditioned so Cholesky update/downdate can be numerically unstable \cite{downdate}. Also, the complexity of Hochman's algorithm is $O(mn^2)$ and our algorithm, with complexity $O(mn\log m +n^4)$, is faster as long as $m$ is larger than $n^2$. 

\subsection{Experiment}

\begin{table}[h]
\begin{center}
\begin{minipage}{\textwidth}
\caption{AAA speedup for four functions that give different values of $n$ with $m = 10^6$ points. The points were sampled uniformly at random from the domain of each function.}
\label{aaa_reuse}
\begin{tabular*}{\textwidth}
{@{\extracolsep{\fill}}cccc@{\extracolsep{\fill}}}
\toprule
$f(z)$ & $n$ & Speedup & Domain\\
\midrule
 $\log(2+z^4)/(1-16z^4)$ & 32  & 10.49$\times$ & $\{z:\abs{z}=1\}$ \\
 $\sqrt{z(1-z)}\sqrt{(z-i)(1+i-z)}$  & 60  & 14.00$\times$ & $\{z= x+iy: x,y\in [0,1]\}$ \\
 $\tan(128z)$  & 105  & 19.40$\times$ & $\{z: \abs{z}\leq 1 \}$ \\
 $\tan(256z)$  & 190 & 32.69$\times$ & $\{z: \abs{z} \leq 1 \}$ \\
 \botrule
\end{tabular*}
\end{minipage}
\end{center}
\end{table}

In Table \ref{aaa_reuse}, we conducted experiments 
with various functions using $10^6$ points randomly sampled\footnote{It is often excessive and unnecessary to take a million sample points in AAA. In many cases, say $10^3$--$10^4$ points would suffice. Nonetheless, when the function has singularities on or near the domain of interest, and the precise location of the singularity is unknown, it is sensible to take as many sample points as possible to ensure sufficiently many sample points are near the singularity.} from the domain of each function. These functions were chosen so that the functions give different values of $n$ in the rational approximation. This will demonstrate how the sketched version of the AAA algorithm performs in the high-dimensional case when compared with the original AAA algorithm. By reusing the sketch we already see a great speedup even for small values of $n$. The function $f(z)=\log(2+z^4)/(1-16z^4)$ is estimated by a rational function with $n=32$ and we achieved more than $10\times$ speedup. For a larger value of $n$, in the case $f(z)= \tan(256z)$ with $n=190$, we get more than $30\times$ speedup.

We conclude with a plot (Figure \ref{aaaplot}) showing the qualitative similarity between the rational approximations of $f(z) =\sqrt{z(1-z)}\sqrt{(z-i)(1+i-z)}$ obtained using the original AAA algorithm and the version where we have reused the sketch. Both the AAA approximant $r_1$ and the sketched AAA approximant $r_2$ provide good approximation to the original function $f$.
\begin{figure}[h]
\hspace*{-1cm}
\includegraphics[scale = 0.55]{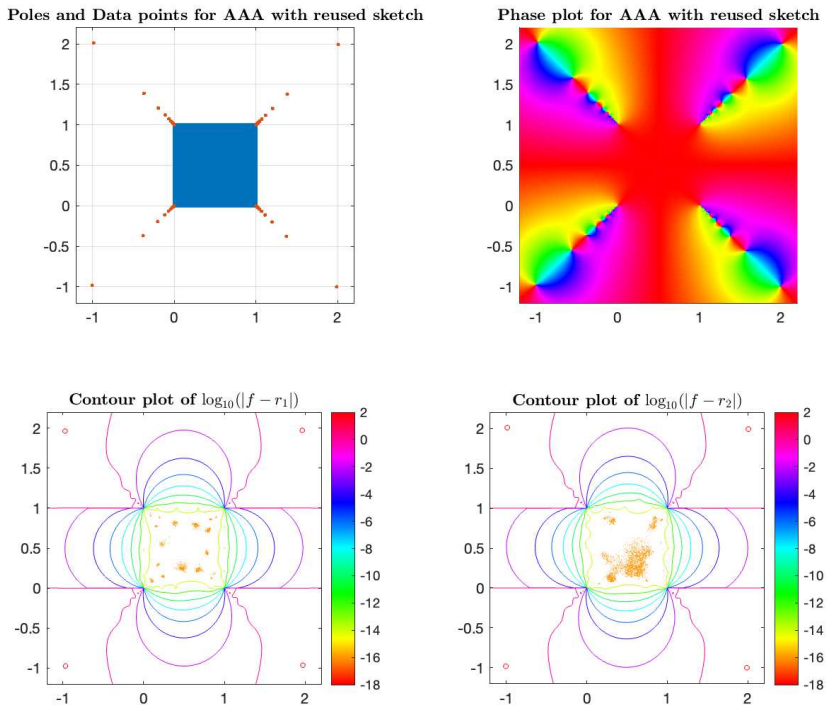}
\centering
\caption{Plots obtained with $f(z) =\sqrt{z(1-z)}\sqrt{(z-i)(1+i-z)}$ with $10^6$ points randomly sampled from $\{z= x+iy: x,y\in [0,1]\}$. $r_1$ and $r_2$ are respectively the rational approximation obtained using the original AAA algorithm and the AAA algorithm with reused sketch. The bottom two contour plots show the logarithm (base 10) of the approximation error. We see that the two versions give approximately the same quality of approximation. The top left plot shows the data points (blue) and the poles (red) for $r_2$ and the top right plot shows the phase plot of $r_2$.
}
\label{aaaplot}
\end{figure}

\bmhead{Acknowledgments}

TP was supported by the Heilbronn Institute for Mathematical Research.
We thank the anonymous reviewers for their many insightful comments and suggestions.

\section*{Declarations}

\bmhead{Conflict of interest} The authors declare no potential conflict of interests.

\bibliography{sn-bibliography}



\end{document}